\documentclass[reqno]{amsart}

\usepackage{amsmath}
\usepackage{amssymb}
\usepackage[margin=1in]{geometry}
\usepackage{xcolor}
\usepackage{comment}
\usepackage{graphicx}
\usepackage{empheq}


\theoremstyle{plain}
\newtheorem{thm}{Theorem}[section]
\newtheorem{lem}{Lemma}[section]
\newtheorem{prop}{Proposition}[section]

\theoremstyle{definition}
\newtheorem{defn}{Definition}[section]

\theoremstyle{remark}


\newcommand{\norm}[1]{\left\lVert#1\right\rVert}
\newcommand{\Lip}{\operatorname{Lip}}

\newcommand{\diag}[1]{\left[#1\right]}
\newcommand{\sgn}[1]{\operatorname{sgn}(#1)}

\usepackage{cite}
\usepackage{hyperref}
\hypersetup{ colorlinks = true, urlcolor = blue, linkcolor = blue, citecolor = red }
\usepackage{enumitem}
\usepackage{nameref}
\makeatletter
\let\orgdescriptionlabel\descriptionlabel
\renewcommand*{\descriptionlabel}[1]{%
  \let\orglabel\label
  \let\label\@gobble
  \phantomsection
  \edef\@currentlabel{#1\unskip}%
  \let\label\orglabel
  \orgdescriptionlabel{#1}%
}
\numberwithin{equation}{section}



\begin{document}
	
\title[ Lipschitz regularity for anisotropic equations with nonstandard growth]{Lipschitz regularity for anisotropic fully nonlinear equations with nonstandard growth}

\author[Byun]{Sun-Sig Byun}
\address{Department of Mathematical Sciences and Research Institute of Mathematics,
	Seoul National University, Seoul 08826, Republic of Korea}
\email{byun@snu.ac.kr}

\author[Kim]{Hongsoo Kim}
\address{Department of Mathematical Sciences, Seoul National University, Seoul 08826, Republic of Korea}
\email{rlaghdtn98@snu.ac.kr}

\thanks {S.-S. Byun was supported by Mid-Career Bridging
Program through Seoul National University. H. Kim was supported by the National Research Foundation of Korea(NRF) grant funded by the Korea government [Grant No. 2022R1A2C1009312].}

\makeatletter
\@namedef{subjclassname@2020}{\textup{2020} Mathematics Subject Classification}
\makeatother
\subjclass[2020]{35B65, 35D40, 35J15, 35J25,  35J70}
\keywords{Fully nonlinear elliptic equations, Ishii-Lions method, anisotropic problem, Lipschitz regularity}

\everymath{\displaystyle}

\begin{abstract}
	We establish interior Lipschitz regularity for solutions to anisotropic fully nonlinear equations with nonstandard growth, without imposing any restriction on the gap between the highest and lowest growth exponents.
    Our proof is based on an anisotropic variant of the seminal Ishii–Lions method.
    Our result furnishes a viscosity analogue of the divergence-form theory in \cite{Bousquet20}, adapted to the non-divergence setting.
\end{abstract}

\maketitle

\section{Introduction} \label{sec1}
In this paper, we study the interior Lipschitz regularity of viscosity solutions of the following degenerate anisotropic fully nonlinear elliptic equation:
\begin{align} \label{PDE}
    F(D^2u, Du, x) = f(x,Du) \quad \text{in } B_1,
\end{align}
where $F=F(M,z,x)$ satisfies
\begin{align} \label{pigrowth}
     \lambda\norm{\diag{|z_i|^{p_i/2}}N\diag{|z_i|^{p_i/2}}} \leq F(M+N,z,x)- F(M,z,x) \leq \Lambda\norm{\diag{|z_i|^{p_i/2}}N\diag{|z_i|^{p_i/2}}}
\end{align}
for any matrix $N\geq 0$ and constants $0\leq p_1 \leq \cdots \leq p_n$, where $\diag{|z_i|^{p_i/2}}$ is the diagonal matrix with entries $|z_i|^{p_i/2}$ on the diagonal.
The main point is the present paper is that there is no assumption about the closeness of $p_1$ and $p_n$; for example, an upper bound of $p_n/p_1$ or $p_n-p_1$ which is the condition imposed in the earlier papers as in \cite{Demengel17}.

The (degenerate) anisotropic Laplacian operator, which is defined as
\begin{align} \label{piLap}
    \Delta_{(p_i)} u = \sum_i \partial_i(|\partial_iu|^{p_i-2} \partial_iu) = \sum_i (p_i-1)|\partial_iu|^{p_i-2}\partial_{ii}u
\end{align}
for $p_i \geq 0$, has been a classical research topic in the area of PDEs and extensively studied by many authors.
It is, in the variational sense, Euler-Lagrange equation of the following orthotropic energy functional
\begin{align} \label{piLap2}
    w \mapsto \int \sum_i \frac{1}{p_i} |\partial_iw|^{p_i} dx.
\end{align}
Note that the anisotropic $(p_i)$-Laplacian is more degenerate than the $p$-Laplacian, given by $$\Delta_{p} u = \sum_i \partial_i(|\nabla u|^{p-2} \partial_iu).$$
While the $p$-laplacian degenerates only when $|\nabla u|=0$, the anisotropic  $(p_i)$-Laplacian degenerates when $\partial_iu=0$ for some $i$. 
This makes the analysis more difficult and delicate since the equation can be degenerate where the gradient is large enough.
Thus, even for the pseudo $p$-Laplacian case when $p_i$ are all the same, the Lipschitz regularity was not known until recently;
Bousquet, Brasco, Leone, and Verde \cite{Bousquet18} proved the Lipschitz regularity for the weak solution. See also \cite{Bousquet16,Lindquist18,Brasco17}.
Furthermore, Demengel \cite{Demengel162} proved the Lipschitz regularity for the viscosity solution, and also considered a more general class of the Pucci operators together with Birindelli \cite{Demengel16} using the Ishii-Lions method.

Even if the anisotropic degeneracy is ignored, the equation still exhibits a nonstandard growth condition due to the variability of $(p_i)$.
The nonstandard growth condition means that 
\begin{align} \label{nonstand}
    w \mapsto \int F(Dw) dx, \quad \text{ where } \quad |z|^{p_1} \lesssim F(z) \lesssim |z|^{p_1}+|z|^{p_n},
\end{align}
which was considered by Marcellini \cite{Marcellini89,Marcellini91}.
If $p_n / p_1$ is large, then there exist well-known unbounded counterexamples by Giaquinta \cite{Giaquinta87}, Marcellini \cite{Marcellini89,Marcellini91} and Hong \cite{Hong92}. 
For the boundedness for weak solutions of \eqref{piLap}, we require that the gap between $p_1$ and $p_n$ be small, and the known optimal condition, established by Fusco and Sbordone \cite{Fusco90}, is $p_n \leq \overline{p}^\star$ where 
\begin{align*}
    \frac{1}{\overline{p}} = \frac{1}{n}\sum_i\frac{1}{p_i}, \quad \overline{p}^\star = \frac{n\overline{p}}{n-\overline{p}}.
\end{align*}
Cupini, Marcellini and Mascolo \cite{Cupini15,Cupini17} extended this result to more general functionals. See also \cite{Dibenedetto16, Fusco93, Cianchi00,Granucci19, Cupini09,Bildhauer07,Bhattacharya96,Canale01,Baroni17,Liskevich09}.
On the other hand, if we only consider bounded solutions, it is interesting to see what is the condition on $p_1$ and $p_n$ in order to get the regularity result.

Surprisingly, assuming the boundedness of the solution a priori, Bousquet and Brasco \cite{Bousquet20} recently proved that 
the weak solution of \eqref{piLap} is Lipschitz continuous without any assumption on the closeness of $p_1$ and $p_n$.
The method used in \cite{Bousquet20} is based on a Caccioppoli-type inequality, Moser-type iteration and higher integrability of each directional derivative with multiple iterative scheme, which cannot be applicable to viscosity solutions under consideration in this paper.
See also \cite{Bousquet24} for the singular case.

On the other hand, from the viewpoint of viscosity solutions, Demengel \cite{Demengel17} proved the Lipschitz regularity of viscosity solutions of \eqref{PDE} under the anisotropic $(p_i)$ assumption \eqref{pigrowth} and some structural conditions by using the Ishii-Lions method. However, the condition
\begin{align} \label{p1pn}
    p_n-p_1 < 1
\end{align}
appears to be essential in their approach.
Since a viscosity solution is defined only for continuous functions and is therefore automatically bounded, we believe that the condition \eqref{p1pn} can be removed, as in the case of bounded weak solutions.
We believe that condition \eqref{p1pn} arises from the nonstandard growth condition \eqref{nonstand} for bounded solutions, as discussed in \cite{Choe92,Cupini14,Kim25}.
The main difficulty in dealing with nonstandard growth lies in the ellipticity ratio, defined by
\begin{align} \label{elpratio}
    \mathcal{R}_F(z) := \frac{\text{highest eigenvalue of } \partial_{zz}F(z)}{\text{lowest eigenvalue of } \partial_{zz}F(z)}\lesssim 1+|z|^{p_n-p_1},
\end{align}
which can blow up when $|z| \rightarrow \infty$ potentially leading to irregularity of the solution when $p_n-p_1$ is large.
However, if we consider the ellipticity ratio of \eqref{piLap2} in each direction, then
\begin{align} \label{elpratio2}
    \mathcal{R}_F^{i}(z) := \frac{ \max \{ \partial_{z_iz_i}F(z) \} }{ \min \{ \partial_{z_iz_i}F(z) \} }\lesssim C
\end{align}
which remains bounded regardless of the values of $p_1$ and $p_n$ is, even though the growth of each directional derivative $\partial_{z_iz_i}F(z)$ may differ. Therefore, by carefully treating each direction of the anisotropic equation separately, without mixing them, one may obtain regularity results without relying on the condition \eqref{p1pn}.
In fact, in \cite{Demengel17} every function used in the proof via the Ishii-Lion method is isotropic.
As a result, without incorporating anisotropic information, the issue of the nonstandard growth ellipticity ratio \eqref{elpratio} becomes unavoidable, and the assumption \eqref{p1pn} remains necessary.
Therefore, in this work, we consider anisotropic test functions within the Ishii–Lions framework to take advantage of the directional ellipticity bound \eqref{elpratio2} and establish Lipschitz regularity without relying on the assumption \eqref{p1pn}. 

We state the main assumptions on \eqref{PDE} as follows.
For any $x,y \in B_1$, $z\in \mathbb{R}^n$ and $M,N \in S(n)$, we assume
\begin{description}
    \item[(A1)\label{A1}] $F$ is anisotropic $(p_i)$-growth with $0\leq p_1 \leq \cdots \leq p_n$ in the sense that there holds
    \begin{align*}
        \mathcal{M}^-_{\lambda,\Lambda}(\diag{|z_i|^{p_i/2}}N\diag{|z_i|^{p_i/2}}) \leq F(M+N,z,x) - F(M,z,x) \leq \mathcal{M}^+_{\lambda,\Lambda}(\diag{|z_i|^{p_i/2}}N\diag{|z_i|^{p_i/2}})
    \end{align*}
    with some constants $0<\lambda< \Lambda$.
    \item[(A2)\label{A2}] For any $h \in \mathbb{R}$, $F$ satisfies
    \begin{align*}
    |F(M,z+he_i,x) - F(M,z,x)| \leq& \Lambda\left||z_i+h|^{p_i}-|z_i|^{p_i}\right||M_{ii}| \\
    +&\Lambda \sum_{j\neq i}\left(\left||z_i+h|^{p_i/2}-|z_i|^{p_i/2}\right||z_j|^{p_j/2}|M_{ij}|\right).
    \end{align*} 
    \item[(A3)\label{A3}] $F$ satisfies
    \begin{align*}
    |F(M,z,x) - F(M,z,y)| \leq \Lambda|x-y|^\chi\sum_{i,j}|z_i|^{p_i/2}|z_j|^{p_j/2}|M_{ij}|,
    \end{align*}
    for some $0<\chi\leq 1$.
    \item[(A4)\label{A4}] $f \in C(\Omega \times \mathbb{R}^n)$ with
    \begin{align*}
        |f(x,z)| \leq C_f(1+\sum_i|z_i|^{p_i+1}),
    \end{align*}
    for some constant $C_f > 0$.
\end{description}

Some examples of models satisfying the above assumptions are:
\begin{enumerate}
    \item the anisotropic $(p_i)$-Laplacian equation \eqref{piLap} with H\"older continuous coefficients:
    \begin{align} \label{dp}
        \sum_i a_i(x)|\partial_iu|^{p_i}\partial_{ii}u = f(x),
    \end{align}
    where $a_i \in C^\chi$ for some $\chi>0$ and $1 \leq a_i(x) \leq \Lambda$.
    \item the anisotropic Pucci operator in \cite{Demengel16}:
    \begin{align} \label{pq}
      \mathcal{M}^\pm_{\lambda,\Lambda} \left( \diag{|D_iu|^{p_i/2}} D^2u \diag{|D_iu|^{p_i/2}} \right) =f(x),
    \end{align}
    for some $0<\lambda\leq \Lambda$.
    \item More generally,
    \begin{align} \label{Gwithpi}
        F(D^2u,Du,x) := G\left( \diag{a_i(x)|D_iu|^{p_i/2}}D^2u\diag{a_i(x)|D_iu|^{p_i/2}}\right) =f(x),
    \end{align}
     where $a_i \in C^\chi$ with $\chi>0$, $1 \leq a_i(x) \leq \Lambda$ and $G=G(M)$ is uniformly elliptic in the sense that $$ \mathcal{M}^-_{\lambda,\Lambda}(N)\leq G(M+N)-G(M) \leq \mathcal{M}^+_{\lambda,\Lambda}(N) $$ for any $M,N \in S(n)$.
\end{enumerate}

For $0\leq p_1 \leq \cdots \leq p_m \leq\cdots \leq p_n$, we denote by $p_m$ to be the smallest number which is not $0$.
We state the main theorem below.
\begin{thm} \label{Main}
    Let $u \in C(B_1)$ be a viscosity solution of the anisotropic problem
    \begin{align*}
        F(D^2u,Du,x) = f(x,Du) \quad \text{in } \ B_1
    \end{align*}
    under the assumptions \ref{A1}-\ref{A4}.
    Then $u \in \Lip(B_{1/2})$ with the estimate
    \begin{align*}
        \norm{u}_{\Lip(B_{1/2})} \leq C(n,\lambda, \Lambda, p_m, p_n, C_f, \norm{u}_{L^\infty(B_1)}).
    \end{align*}
\end{thm}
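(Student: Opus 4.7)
The strategy is to run the Ishii-Lions doubling-of-variables method with a test function whose mixed Hessian is \emph{block-diagonal in the pairs $(x_i, y_i)$}, so that the Crandall-Ishii matrix inequality separates into direction-wise bounds matched to the anisotropic ellipticity in \ref{A1}. Fix $x_0 \in B_{1/2}$ and consider
$$\Phi(x,y) := u(x) - u(y) - L\sum_{i=1}^n \omega_i(|x_i - y_i|) - \Gamma\bigl(|x-x_0|^2 + |y-x_0|^2\bigr),$$
where each $\omega_i$ is a concave $1$D modulus, for concreteness $\omega_i(t) := t - c_i t^{1+\tau_i}$ (extended linearly beyond a small threshold) with $c_i, \tau_i \in (0,1)$ to be calibrated in a direction-dependent way. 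We aim to show $\Phi \leq 0$ on $\overline{B_1} \times \overline{B_1}$ for some $L$ depending on the stated quantities; evaluating at $y = x_0$ then yields the Lipschitz bound. Assume for contradiction that $\Phi$ achieves a positive maximum at $(\bar x, \bar y)$. Since $\Phi(x,x) \leq 0$, we have $\bar x \neq \bar y$, and choosing $\Gamma \sim \|u\|_{L^\infty(B_1)}$ keeps $\bar x, \bar y$ interior. A direct inspection $L\sum_i\omega_i(|\bar x_i-\bar y_i|) \leq 2\|u\|_{L^\infty}$ also confines $|\bar x - \bar y|$ to the range where each $\omega_i$ is strictly concave once $L$ is large enough.

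\textbf{Key inequalities.} Applying the Crandall-Ishii lemma yields matrices $X, Y \in S(n)$ and gradients $p^\pm$ satisfying the viscosity sub-/super-solution inequalities at $\bar x$ and $\bar y$. Because $\sum_i \omega_i(|x_i - y_i|)$ has mixed Hessian that is block-diagonal in the $(x_i, y_i)$ pairs, the matrix inequality takes the direction-separated form
$$(X - Y)_{ii} \leq 2L\omega_i''(|\bar x_i - \bar y_i|) + O(\Gamma + \iota), \qquad |(X - Y)_{ij}| \leq C\bigl(|\omega_i''|\,|\omega_j''|\bigr)^{1/2} + O(\Gamma + \iota) \quad \text{for } i \neq j.$$
Combining with \ref{A1}-\ref{A3} to absorb the discrepancy between $p^+,p^-$ and between $\bar x, \bar y$ in the arguments of $F$, and bounding the source via \ref{A4}, the viscosity inequalities collapse to a scalar statement of the form
$$\sum_i \lambda |p^+_i|^{p_i} L \bigl|\omega_i''(|\bar x_i - \bar y_i|)\bigr| \;\lesssim\; C_f\Bigl(1 + \sum_i |p^+_i|^{p_i + 1}\Bigr) + \text{(lower order)}.$$
Since $|p^+_i| \sim L$ for each direction with $\bar x_i \neq \bar y_i$, the $i$-th balance reduces to $L|\omega_i''(|\bar x_i-\bar y_i|)| \gtrsim |p^+_i|$, which is achievable by selecting $\tau_i$ small (depending on $p_i$ and hence on $p_m$) so that $|\omega_i''|$ is sufficiently large on the relevant range.

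\textbf{Main obstacle.} The essential difficulty that the direction-wise test function resolves is precisely the mismatch highlighted in the introduction. In the isotropic choice $\phi = L\omega(|x-y|)$ of \cite{Demengel17}, the resulting scalar inequality mixes all exponents and reads schematically $L|\omega''|\,|p|^{p_1} \gtrsim |p|^{p_n + 1}$, which closes only under \eqref{p1pn}. By separating the penalty direction-by-direction, each balance decouples to $L|\omega_i''|\,|p_i|^{p_i} \gtrsim |p_i|^{p_i+1}$, relying only on the \emph{direction-wise} ellipticity ratio \eqref{elpratio2}, which is uniformly bounded in $p_1,p_n$. The remaining technical hurdles are: (i) absorbing the mixed cross-terms $|z_i|^{p_i/2}|z_j|^{p_j/2}|M_{ij}|$ from \ref{A2} (and the analogous weights from \ref{A3}) into the diagonal contributions via Young's inequality without reintroducing the bad global ratio; (ii) handling directions with $p_i = 0$, where $\omega_i$ supplies uniform ellipticity directly and no scaling balance is needed, which is why the quantitative constant depends on $p_m$, the smallest nonzero exponent; and (iii) controlling the case where $|\bar x_i - \bar y_i|$ is very small, exploiting that $|p^+_i|$ is then correspondingly small so that those directions cannot spoil the closure of the inequality.
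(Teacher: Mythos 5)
Your plan is in the spirit of the paper's (Ishii--Lions with anisotropically adapted penalization), but as written it has three genuine gaps, and the route it takes (a single-step Lipschitz argument with a coordinate-separated penalty $\sum_i\omega_i(|x_i-y_i|)$) does not close. First, the test function $\sum_i \omega_i(|x_i-y_i|)$ with $\omega_i(t)=t-c_it^{1+\tau_i}$ is not $C^2$ (not even $C^1$) at points where a single coordinate difference vanishes; at the maximum you only know $\bar x\neq\bar y$, and $\bar x_i=\bar y_i$ for some $i$ cannot be excluded, so the Crandall--Ishii lemma does not apply to your $\Phi$. (This is precisely why the paper works with one anisotropic norm $|a|_\alpha$ with exponents $2\alpha_i\geq 2$, which is $C^{1,1}$ away from the full origin, and explicitly notes that individual $a_i$ may vanish.) Your point (iii) also fails for your choice of $\omega_i$: since $\omega_i'(0^+)=1$, the gradient component is of size $L$ even when $|\bar x_i-\bar y_i|$ is tiny, so those directions are not harmless. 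Second, the ``direction-separated'' matrix bounds you assert, in particular $|(X-Y)_{ij}|\lesssim (|\omega_i''|\,|\omega_j''|)^{1/2}$, do not follow from the standard scalar-$\epsilon$ Crandall--Ishii lemma: that lemma only controls off-diagonal entries through $1/\epsilon+\norm{Z}$, which is at least $\max_k L|\omega_k''(|\bar x_k-\bar y_k|)|$, i.e.\ the worst direction contaminates all componentwise bounds. This is exactly the obstruction the paper isolates and removes by proving an anisotropic Ishii--Lions lemma (Lemma \ref{IL}) with a direction-dependent vector $\epsilon=(\epsilon_i)$, $\epsilon_i\sim 1/|Z_{ii}|$; your proposal uses no such lemma, so the componentwise estimates it relies on are unproved.

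Third, even granting directional curvature, the closure of the inequality is not achieved by the stated balance $L|\omega_i''|\gtrsim|p_i^+|$. With your penalty all gradient components are of size $L$, so the good (negative) Pucci term comes weighted by $|z_{i_0}|^{p_{i_0}}\sim L^{p_{i_0}}$ for one particular direction (possibly with small or zero exponent), while the terms generated by \ref{A2}, \ref{A3} and \ref{A4} carry weights up to $L^{p_n}$ times $|X_{ij}|$; since in a one-step argument you only know $|\bar x-\bar y|\lesssim \norm{u}_{L^\infty}/L$, the gain $|\bar x-\bar y|^{\chi}\sim L^{-\chi}$ cannot beat the mismatch $L^{p_n-p_{i_0}}$, and the condition \eqref{p1pn} (or a variant involving $\chi$) reappears -- the phrase ``absorb via Young's inequality without reintroducing the bad global ratio'' is precisely the unproved crux. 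The paper avoids this with two mechanisms your proposal lacks: in the H\"older step it inserts direction-dependent scalings $L^{\beta_i}$ and exponents $\alpha_i$ so that $|z_i|^{p_i}|Z_{ii}|\sim L/|a|_\alpha^{2-\gamma}$ is the same in every direction (together with the vector-$\epsilon$ lemma to transfer this to $X,Y$), and only then, in a second step, it proves Lipschitz continuity with the standard isotropic penalty, using the already-established $C^\gamma$ bound with $\gamma$ arbitrarily close to $1$ to get $\delta\lesssim L^{-1/(1-\gamma)}$ and $|x-x_0|\lesssim\delta^{\gamma/2}$, which dominates any fixed power $L^{p_n}$. To repair your argument you would need, at minimum, a componentwise Ishii--Lions lemma of the type of Lemma \ref{IL}, a smooth (away from $x=y$ only) anisotropic penalty replacing the coordinate sum, and either a prior H\"older estimate or direction-dependent scaling of $L$ to rebalance the weights $|z_i|^{p_i}$.
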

We gives some remarks of the theorem.
\begin{itemize}
\item The assumption \ref{A1} was first considered by Demengel in \cite{Demengel17}.
Observe that \eqref{pigrowth} is equivalent to the assumption \ref{A1}.
However, the assumption \ref{A2} and \ref{A3} differ from those in \cite{Demengel17}.
In that work,
the following conditions 
\begin{align*}
    |F(M,z+he_i,x) - F(M,z,x)| \leq \Lambda ||z_i+h|^{p_i}-|z_i|^{p_i}|M|,
\end{align*}
and
\begin{align*}
    |F(M,z,x) - F(M,z,y)| \leq \Lambda|x-y|^\chi\sum_{i}|z_i|^{p_i}|M|.
\end{align*}
were considered instead of \ref{A2} and \ref{A3}, respectively.
Observe that the two assumptions above involve $|M|$,  which can be interpreted as information about the combined behavior across all directions. However, in the anisotropic setting where the growth may differ by direction, it is necessary to extract directional information. Therefore, we instead consider assumptions involving the individual components $|M_{ij}|$. 
\item In fact, for weak solutions \cite{Bousquet20}, the authors consider only the case with homogeneous right-hand side, $f=0$.
It appears that, by employing the similar approach used in \cite{Bousquet20}, one could obtain Lipschitz regularity with $f\neq0$, though at the cost of substantial additional computation.
On the other hand, our approach here allows for handling the nonhomogeneous case $f\neq0$ more easily, but only when $ f\in L^\infty$, not $ L^\gamma$ for some $\gamma>n$.
\end{itemize}

We now introduce main ideas and novelties of our proof.
We intend to design a suitable anisotropic test function so that each direction exhibits even growth and compute each directional component separately without mixing them.
To this end, we employ the celebrated Ishii–Lions technique developed in \cite{Ishii92}.
First, we prove that $u$ is $C^\gamma$ for any $\gamma<1$ close to 1, and then prove the Lipschitz continuity in light of $C^\gamma$-continuity.
Recall that in the Ishii-Lions technique, the main goal is to show that, for sufficiently large constants $L,K>0$ 
\begin{align*}
        M= \max_{x,y\in \overline{B_1}} \left\{ u(x) - u(y) - L\phi(|x-y|) - \frac{K}{2}|x-x_0|^2 -\frac{K}{2}|y-x_0|^2 \right\} \leq 0,
    \end{align*}
where $\phi(t)=t^\gamma$, in order to obtain $\gamma$-H\"older regularity.
This can be proved by contradiction.
Suppose $M>0$, and let $x,y$ be the points at which the maximum is attained and set $a=x-y$.
We first consider $\phi$ as a test function and substitute $D\phi(|a|),D^2\phi(|a|)$ into the equation in place of $Du, D^2u$, respectively.
In doing so, we encounter terms of the form $|D_i\phi(|a|)|^{p_i}|D_{ii}\phi(|a|)|$, but due to the variation in the exponents $p_i$, these quantities are not balanced across directions. 
To address this issue, we instead employ an anisotropic distance function with direction-dependent growth rates $\alpha=(\alpha_i)$, defined by
\begin{align*}
    |a|_\alpha^2 = \frac{1}{\alpha_1}|a_1|^{2\alpha_1} + \cdots+ \frac{1}{\alpha_n}|a_n|^{2\alpha_n}, \quad \alpha_i = \frac{1+p_i/2}{1+\gamma p_i/2}
\end{align*}
instead of the standard isotropic distance function $|a|^2=|a_1|^2 + \cdots |a_n|^2$.
Then, by direct computation, the terms $|D_i\phi(|a|_\alpha)|^{p_i}|D_{ii}\phi(|a|_\alpha)|$ become balanced across directions, and therefore the closeness condition between $p_1$ and $p_n$ is no longer required.

Moreover, we need to find the the Hessians $X=D^2u(x)$ and $Y=D^2u(y)$, which can be achieved by applying the Ishii–Lions lemma to transfer the estimates of $Z=D^2\phi(|x|_\alpha)$ to $X,Y$.
Explicitly, for any $\epsilon>0$, we have
\begin{align*}
    -\left(\frac{1}{\epsilon} + \norm{Z}\right) 
    \begin{pmatrix}
        I & 0 \\
        0 &I
    \end{pmatrix}
    \leq
    \begin{pmatrix}
        X & 0 \\
        0 &-Y
    \end{pmatrix}
    \leq
    \begin{pmatrix}
        Z^\epsilon & -Z^\epsilon \\
        -Z^\epsilon & Z^\epsilon
    \end{pmatrix},
\end{align*}
where $Z^\epsilon=Z+2\epsilon Z^2$.
See \cite{Ishii92, Katzourakis15} for the detailed explanation of the Ishii-Lion lemma.
By choosing $\epsilon \lesssim 1/\norm{Z}$, we obtain $\norm{X}, \norm{Y} \lesssim \norm{Z}$.
However, in the anisotropic setting, we require componentwise estimates for $X_{ij}$ and $Y_{ij}$ due to the assumption \ref{A2} and \ref{A3}.
In particular, we need estimates of the form $|X_{ii}| \lesssim |Z_{ii}|$ so that the quantity $|D_i\phi(|a|_\alpha)|^{p_i}|X_{ii}|$ remains balanced across directions.
By applying $(e_i,0)$ to the above matrix inequalities, we obtain only $|X_{ii}| \lesssim \norm{Z} $.
However, since $\norm{Z}$ combines effects from all directions while  $Z$ grows anisotropically, it fails to capture the precise behavior of the individual component $|Z_{ii}|$.
As a result, this approach fails to yield the desired estimate $|X_{ii}| \lesssim |Z_{ii}|$.

In order to overcome this obstacle, we instead consider the anisotropic Ishii-Lions Lemma (Lemma \ref{IL}).
Rather than using a scalar parameter $\epsilon>0$, we introduce a direction-dependent vector $\epsilon=(\epsilon_i)$ where each $\epsilon_i>0$ may vary with each direction.
Then we have
\begin{align*}
    -(1+\norm{\diag{\sqrt{\epsilon_i}}Z\diag{\sqrt{\epsilon_i}}}) \begin{pmatrix}
        \diag{1/\epsilon_i} & 0 \\
        0 & \diag{1/\epsilon_i}
    \end{pmatrix}
    \leq 
    \begin{pmatrix}
        X & 0 \\
        0 &-Y
    \end{pmatrix}
    \leq 
    \begin{pmatrix}
        Z^\epsilon & -Z^\epsilon \\
        -Z^\epsilon & Z^\epsilon
    \end{pmatrix},
\end{align*}
where $Z^\epsilon = Z + 2Z\diag{\epsilon_i}Z$.
Then by choosing $\epsilon_i \lesssim 1/|Z_{ii}|$ and applying $(e_i,0)$ or $(0,e_i)$ to the above inequalities, we get the desired componentwise  estimates $|X_{ii}|, |Y_{ii}| \lesssim |Z_{ii}|$.

The paper is organized as follows.
In Section \ref{sec2} we specify the notations, introduce some preliminary lemmas.
In Section \ref{sec3} we prove the $\gamma$-H\"older regularity of the solution for any $\gamma<1$ close to 1.
In Section \ref{sec4} we prove the Lipschitz regularity of the solution and finish the proof of Theorem \ref{Main}.
In Section \ref{sec5} we prove the anisotropic Ishii-Lions lemma (Lemma \ref{IL}) by a simple anisotropic scaling of the original Ishii-Lions lemma.

\section{Notations and Preliminaries} \label{sec2}

Throughout the paper, we write $B_r(x_0) = \{ x \in \mathbb{R}^n : |x-x_0| <r \}$ and $B_r = B_r(0)$.
$S(n)$ denotes the space of symmetric $n \times n$ real matrices and $I$ denotes the identity matrix.

For some $a=(a_i) \in \mathbb{R}^n$, we always write the diagonal matrix with entries $a_i$ as following.
\begin{align*}
    \diag{a_i} := \operatorname{diag}(a_1,\cdots,a_n) \in S(n).
\end{align*}
We also write
\begin{align*}
    \diag{a_i,a_i} := \operatorname{diag}(a_1,\cdots,a_n,a_1,\cdots,a_n) \in S(2n).
\end{align*}
As usual, the sign function $\operatorname{sgn} : \mathbb{R}\rightarrow \{ -1,0,1\} $ is defined as $\sgn{t}:=\frac{t}{|t|}$ for $t \neq 0$ and $\sgn{0}=0$.

We state the definition of subjet and superjet introduced in \cite{Ishii92}, which are used in the definition of viscosity solution below.
\begin{defn}
    For any continuous function $u \in C(\Omega)$ and $x \in \Omega$, we define the superjet and subjet by
    \begin{align*}
        \mathcal{J}^{2,+}_\Omega u(x) = \left\{ (p,X) \in \mathbb{R}^n \times S(n) : u(x+h) \leq u(x) + \langle p, h \rangle  + \frac{1}{2} \langle Xh, h\rangle +o(h^2), \ \ \forall h \in \mathbb{R}^n \right\}, \\
        \mathcal{J}^{2,-}_\Omega u(x) = \left\{ (p,X) \in \mathbb{R}^n \times S(n) : u(x+h) \geq u(x) + \langle p, h \rangle  + \frac{1}{2} \langle Xh, h\rangle +o(h^2), \ \ \forall h \in \mathbb{R}^n \right\}.
    \end{align*}
    Furthermore, we define the closed superjet and subjet by
    \begin{align*}
        \overline{\mathcal{J}^{2,\pm}_\Omega} u(x) = \left\{ (p,X)  : \exists x_n \in \Omega, \exists (p_n,X_n) \in \mathcal{J}^{2,\pm}_\Omega u(x_n), \ (x_n, u(x_n),p_n,X_n) \rightarrow (x,u(x),p,X)\right\}.
    \end{align*}
\end{defn}

Now we recall the definition of the inequalities \eqref{PDE} in the viscosity sense from \cite{Ishii92, Caffarelli95} as follows.
\begin{defn}
Let $f \in C(B_1)$.
We say that $u \in C(\overline{B}_1)$ satisfies 
$$F(D^2u,Du,x) \leq f(x,Du) \quad\text{in}\ B_{1} \quad (\text{resp.} \geq) $$
in the viscosity sense, if for any $x_0 \in B_1$ and test function $\psi \in C^2(B_1)$ such that $u-\psi$ has a local minimum (resp. maximum) at $x_0$, then
$$F(D^2\psi(x_0),D\psi(x_0),x_0)\leq f(x_0,D\psi(x_0)) \quad (\text{resp.} \geq).$$ 
\end{defn}

We also recall the definition and some properties of the Pucci operator (see \cite{Caffarelli95}).
\begin{defn}
    For given $0<\lambda \leq \Lambda$, we define the Pucci operators $\mathcal{P}_{\lambda,\Lambda}^{\pm} : S(n) \rightarrow \mathbb{R}$ as follows:
\begin{align*}
    \mathcal{M}_{\lambda,\Lambda}^{+}(M) := \lambda \sum_{e_i(M)<0}e_i(M) + \Lambda \sum_{e_i(M)>0}e_i(M), \\
    \mathcal{M}_{\lambda,\Lambda}^{-}(M) := \Lambda \sum_{e_i(M)<0}e_i(M) + \lambda \sum_{e_i(M)>0}e_i(M),
\end{align*}
where $e_i(M)$'s are the eigenvalues of $M$.
\end{defn}
For convenience we abbreviate as $\mathcal{M}^{\pm}:=\mathcal{M}_{\lambda,\Lambda}^{\pm}$.
We also write some properties of the Pucci operator (See \cite{Caffarelli95}).
\begin{prop} 
For any $M,N \in S(n)$, we have
    \begin{enumerate}
        \item $ \mathcal{M}_{\lambda,\Lambda}^{-}(M) +\mathcal{M}_{\lambda,\Lambda}^{-}(N)\leq \mathcal{M}_{\lambda,\Lambda}^{-}(M+N) \leq \mathcal{M}_{\lambda,\Lambda}^{-}(M) +\mathcal{M}_{\lambda,\Lambda}^{+}(N)$. \\
        \item $ \mathcal{M}_{\lambda,\Lambda}^{+}(M) +\mathcal{M}_{\lambda,\Lambda}^{-}(N)\leq \mathcal{M}_{\lambda,\Lambda}^{+}(M+N) \leq \mathcal{M}_{\lambda,\Lambda}^{+}(M) +\mathcal{M}_{\lambda,\Lambda}^{+}(N).$ 
    \end{enumerate}
\end{prop}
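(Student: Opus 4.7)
The plan is to reduce both statements to elementary sub/superadditivity properties of $\inf$ and $\sup$ by first establishing the well-known variational representation of the Pucci extremal operators:
\begin{align*}
\mathcal{M}^+_{\lambda,\Lambda}(M) = \sup_{A \in \mathcal{A}} \tr(AM), \qquad \mathcal{M}^-_{\lambda,\Lambda}(M) = \inf_{A \in \mathcal{A}} \tr(AM),
\end{align*}
where $\mathcal{A} := \{A \in S(n) : \lambda I \leq A \leq \Lambda I\}$.

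To verify this representation, I would diagonalize $M = O D_M O^T$ with $D_M = \operatorname{diag}(e_1(M), \ldots, e_n(M))$ and observe that for any $A \in \mathcal{A}$,
\begin{align*}
\tr(AM) = \tr(O^T A O \cdot D_M) = \sum_{j=1}^n \nu_j(A)\, e_j(M), \qquad \nu_j(A) := (O^T A O)_{jj},
\end{align*}
where $\lambda \leq \nu_j(A) \leq \Lambda$ because $O^T A O \in \mathcal{A}$. Thus $\tr(AM) \leq \Lambda \sum_{e_j(M)>0} e_j(M) + \lambda \sum_{e_j(M)<0} e_j(M) = \mathcal{M}^+(M)$, with equality attained by choosing $A = O D O^T$ where $D_{jj} = \Lambda$ if $e_j(M) > 0$ and $D_{jj} = \lambda$ otherwise. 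The formula for $\mathcal{M}^-$ is obtained analogously, and I record the useful identity $\mathcal{M}^-(-N) = -\mathcal{M}^+(N)$ which follows immediately from the sup/inf duality.

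Next, for statement (1) the lower bound follows from
\begin{align*}
\mathcal{M}^-(M+N) = \inf_{A \in \mathcal{A}} \left[ \tr(AM) + \tr(AN) \right] \geq \inf_{A \in \mathcal{A}} \tr(AM) + \inf_{A \in \mathcal{A}} \tr(AN) = \mathcal{M}^-(M) + \mathcal{M}^-(N).
\end{align*}
The upper bound is obtained by applying the lower bound just derived to the pair $(M+N, -N)$, which gives $\mathcal{M}^-(M) = \mathcal{M}^-((M+N) + (-N)) \geq \mathcal{M}^-(M+N) + \mathcal{M}^-(-N) = \mathcal{M}^-(M+N) - \mathcal{M}^+(N)$; rearranging yields the stated inequality. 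The two inequalities in statement (2) are proved in precisely the same manner, interchanging the roles of $\inf$ and $\sup$ throughout and using the dual identity $\mathcal{M}^+(-N) = -\mathcal{M}^-(N)$.

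No genuine obstacle is anticipated here; the one piece of substantive work is the verification of the variational representation, after which the four Pucci inequalities are purely formal consequences of $\inf(f+g) \geq \inf f + \inf g$ and $\sup(f+g) \leq \sup f + \sup g$ applied to the linear functionals $A \mapsto \tr(AM)$ and $A \mapsto \tr(AN)$ on the convex set $\mathcal{A}$.
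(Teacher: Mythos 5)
Your proof is correct. The paper itself gives no argument for this proposition---it simply cites Caffarelli--Cabr\'e---and your derivation via the variational representation $\mathcal{M}^{\pm}_{\lambda,\Lambda}(M)=\sup/\inf_{\lambda I\leq A\leq \Lambda I}\tr(AM)$, followed by sub/superadditivity of $\sup/\inf$ and the duality $\mathcal{M}^-(-N)=-\mathcal{M}^+(N)$, is exactly the standard argument from that reference.
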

Moreover, we introduce the anisotropic Ishii-Lions Lemma.
\begin{lem} \label{IL} (The anisotropic Ishii-Lions Lemma)
    Let $u,v \in C(\Omega)$ and $\phi(x,y) \in C^2(\Omega\times \Omega)$.
    Assume that $(\overline{x},\overline{y}) \in \Omega\times\Omega$ is a local maximum point of $u(x)+v(y) -\phi(x,y)$.
    Then for any $\epsilon=(\epsilon_i)$ with $\epsilon_i>0$ , there exist $X,Y \in S(n)$ such that
    \begin{align*}
        (D_x\phi(\overline{x},\overline{y}),X) \in \overline{\mathcal{J}^{2,+}_\Omega} u(\overline{x}), \quad (D_y\phi(\overline{x},\overline{y}),Y) \in \overline{\mathcal{J}^{2,-}_\Omega} v(\overline{y})
    \end{align*}
    and
    \begin{align*}
        -(1+\norm{\diag{\sqrt{\epsilon_i},\sqrt{\epsilon_i}}Z\diag{\sqrt{\epsilon_i},\sqrt{\epsilon_i}}}) \begin{pmatrix}
             \diag{1/\epsilon_i} & 0 \\
             0 & \diag{1/\epsilon_i}
         \end{pmatrix}
         \leq 
        \begin{pmatrix}
             X & 0 \\
             0 & Y
         \end{pmatrix}
         \leq 
         Z + Z\diag{\epsilon_i,\epsilon_i}Z,
    \end{align*}
    where $Z = D^2\phi(\overline{x},\overline{y})$.
\end{lem}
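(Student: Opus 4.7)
The plan is to derive Lemma \ref{IL} from the classical scalar-parameter Ishii–Lions lemma by an anisotropic linear change of variables that absorbs the weights $\epsilon_i$ into the coordinates, thereby reducing matters to the isotropic case with a single parameter equal to $1$.

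Concretely, set $D:=\diag{\sqrt{\epsilon_i}}$ and introduce the rescaled functions $\tilde{u}(\tilde{x}):=u(D\tilde{x})$, $\tilde{v}(\tilde{y}):=v(D\tilde{y})$, and $\tilde{\phi}(\tilde{x},\tilde{y}):=\phi(D\tilde{x},D\tilde{y})$ on the pull-back domain. A trivial substitution shows that $(\tilde{\overline{x}},\tilde{\overline{y}}):=(D^{-1}\overline{x},D^{-1}\overline{y})$ is a local maximum of $\tilde{u}(\tilde{x})+\tilde{v}(\tilde{y})-\tilde{\phi}(\tilde{x},\tilde{y})$. Applying the standard Ishii–Lions lemma with scalar parameter equal to $1$ produces $\tilde{X},\tilde{Y}\in S(n)$ with the appropriate jet memberships for $\tilde{u},\tilde{v}$ and the inequality
\begin{align*}
-(1+\norm{\tilde{Z}})\,I_{2n} \ \leq\
\begin{pmatrix} \tilde{X} & 0 \\ 0 & \tilde{Y} \end{pmatrix}
\ \leq\ \tilde{Z}+\tilde{Z}^2,
\end{align*}
where $\tilde{Z}:=D^2\tilde{\phi}(\tilde{\overline{x}},\tilde{\overline{y}})=\diag{\sqrt{\epsilon_i},\sqrt{\epsilon_i}}\,Z\,\diag{\sqrt{\epsilon_i},\sqrt{\epsilon_i}}$.

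Next I would translate back. A short Taylor-expansion check at the linear change $\tilde{x}=D^{-1}x$ yields the jet correspondence $(p,A)\in\overline{\mathcal{J}^{2,+}_\Omega}u(\overline{x})$ if and only if $(Dp,DAD)\in\overline{\mathcal{J}^{2,+}_\Omega}\tilde{u}(\tilde{\overline{x}})$, and similarly for subjets. Setting $X:=D^{-1}\tilde{X}D^{-1}$ and $Y:=D^{-1}\tilde{Y}D^{-1}$ and noting $D_{\tilde{x}}\tilde{\phi}=D\,D_x\phi$ (and analogously for $\tilde{y}$), the jet conditions of Lemma \ref{IL} are fulfilled with the stated gradients $D_x\phi$ and $D_y\phi$. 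Finally, conjugating the matrix inequality above on both sides by $\diag{D^{-1},D^{-1}}=\diag{1/\sqrt{\epsilon_i},1/\sqrt{\epsilon_i}}$ turns the central block into $\begin{pmatrix} X & 0 \\ 0 & Y \end{pmatrix}$, the upper bound into $Z+Z\diag{\epsilon_i,\epsilon_i}Z$, and the lower bound into $-(1+\norm{\tilde{Z}})\diag{1/\epsilon_i,1/\epsilon_i}$, which is exactly the claim.

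I do not expect any genuine obstacle: the content lies entirely in the idea of choosing $D=\diag{\sqrt{\epsilon_i}}$ so that the $(p_i)$-anisotropic weights are exactly the ones made isotropic by the rescaling. The only points demanding care are the jet correspondence under the linear change of variables and the bookkeeping of the two conjugations when undoing the scaling; once these are handled, the isotropic Ishii–Lions lemma does all the analytic work.
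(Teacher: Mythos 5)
Your proposal is correct and follows essentially the same route as the paper's own proof: rescale by $\diag{\sqrt{\epsilon_i}}$, apply the classical Ishii--Lions lemma with parameter $1$ to the rescaled functions, and undo the scaling via the jet correspondence and conjugation by $\diag{1/\sqrt{\epsilon_i},1/\sqrt{\epsilon_i}}$. No gaps.
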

Observe that if each $\epsilon_i$ is the same, then this result coincides with the original Ishii-Lions Lemma (see \cite{Ishii92,Silvestre13, Katzourakis15} and Lemma \ref{ILo}).
The proof of the above lemma can be found later in the Section \ref{sec5}.

Finally, we introduce some functions to deal with anisotropic structure.
For $z\in \mathbb{R}^n$ and $\alpha=(\alpha_i)$ with $\alpha_i\geq 1$, we define an anisotropic distance function as
\begin{align*}
    |z|_\alpha := \sqrt{\frac{1}{\alpha_1}|z_1|^{2\alpha_1} + \cdots+ \frac{1}{\alpha_n}|z_n|^{2\alpha_n}}. 
\end{align*}
Note that this function is $C^{1,1}(\mathbb{R}^n\setminus\{0\})$.
By simple calculation, we have
\begin{gather*} 
    D_i|z|_\alpha = \sgn{z_i} \frac{ |z_i|^{2\alpha_i-1}}{|z|_\alpha} , \\
    D^2|z|_\alpha = \frac{1}{|z|_\alpha}\diag{\sgn{z_i}|z_i|^{\alpha_i-1}}\left( \diag{2\alpha_i-1}- \left(\frac{|z_i|^{\alpha_i}}{|z|_\alpha}\right)  \otimes  \left(\frac{|z_i|^{\alpha_i}}{|z|_\alpha} \right)\right)\diag{\sgn{z_i}|z_i|^{\alpha_i-1}}.
\end{gather*}

\section{H\"older estimates for anisotropic equations} \label{sec3}
In this section, we first prove $\gamma$-H\"older continuity of a solution under consideration for any $\gamma<1$ close to $1$ by using the anisotropic Ishii-Lions method.
\begin{proof}[Proof of H\"older estimates.] 
    We fix $x_0 \in B_{1/2}$ and claim that
    \begin{align*}
        M= \max_{x,y\in \overline{B_1}} \left\{ u(x) - u(y) - \phi(|L^{\beta} (x-y)|_\alpha) - \frac{K}{2}|x-x_0|^2 -\frac{K}{2}|y-x_0|^2 \right\} \leq 0,
    \end{align*}
    where $\phi(t) = t^\gamma/\gamma$ for some $1/2<\gamma<1$ close to 1, $\alpha=(\alpha_i)$ and $\beta=(\beta_i)$ defined as
    \begin{align} \label{ab}
        \alpha_i = \frac{1+p_i/2}{1+\gamma p_i/2}, \qquad \beta_i = \frac{1/2}{1+p_i/2},
    \end{align}
    for positive constant $L,K>0$ large enough.
    Here we use an abuse of notation as
    \begin{align*}
        L^{\beta} (x-y) := (L^{\beta_1}(x_1-y_1), \cdots,L^{\beta_n}(x_n-y_n)).
    \end{align*}
    Note that $1\leq \alpha_i <1/\gamma$.  Since $\phi(|\cdot|_\alpha)$ is $C^\gamma$, $u$ is $C^\gamma$ provided the claim is true.
    We argue by contradiction by assuming $M>0$ for any large $L, K>0$.
    From now on, we write $x,y \in \overline{B_1}$ as points where the maximum $M$ is attained.
    For convenience, we write $a=(a_i)$ and $\delta=(\delta_i)$ as
    \begin{align*}
        a=L^{\beta} (x-y), \quad \delta=x-y.
    \end{align*}
    Then $x\neq y$ and 
    \begin{align*}
        \phi(|a|_\alpha)+\frac{K}{2}|x-x_0|^2 +\frac{K}{2}|y-x_0|^2 \leq 2\norm{u}_{L^\infty}.
    \end{align*}
    We fix $K = 16\norm{u}_{L^\infty}$ so that $|x-x_0| < 1/2$ and $ |y-x_0| < 1/2$, which implies $x,y \in B_1$.
    Also, we get 
    \begin{align} \label{adelta}
       0 <|a|_\alpha \leq (2\norm{u}_{L^\infty})^{1/\gamma}, \quad 0\leq \delta_i \leq \frac{C\norm{u}_{L^\infty}^{1/\alpha_i\gamma}}{L^{\beta_i}}.
    \end{align}
    Note that $a \neq0$ since $x\neq y$, but $a_i$ can be $0$ for some $i$.
     Now we apply the anisotropic Ishii-Lions lemma (Lemma \ref{IL}) to $u(x)- \frac{L_2}{2}|x-x_0|^2$ and $-u(y) - \frac{L_2}{2}|y-x_0|^2$. Then there exist $X,Y \in S(n)$ such that
     \begin{align*}
          (z, X)\in \overline{\mathcal{J}}^{2,+} \left( u(x)- \frac{L_2}{2}|x-x_0|^2\right), \qquad 
          (z, Y)\in \overline{\mathcal{J}}^{2,-} \left( u(y)+ \frac{L_2}{2}|y-x_0|^2\right),
     \end{align*}
     where 
     \begin{align} \label{zest}
         z_i= D_i(\phi(|L^{\beta} (x-y)|_\alpha)) = \sgn{a_i} L^{\beta_i} \frac{|a_i|^{2\alpha_i-1}}{|a|_\alpha^{2-\gamma}} .
     \end{align}
     Thus, we have
     \begin{align*}
         (z_x, X+KI)\in \overline{\mathcal{J}}^{2,+} \left( u(x)\right), \qquad
         (z_y, Y-KI)\in \overline{\mathcal{J}}^{2,-} \left( u(y)\right),
     \end{align*}
     where
     \begin{align*}
         z_x = z+K(x-x_0)\quad \text{and} \quad z_y = z-K(y-x_0).
     \end{align*}
     Moreover, for any $\epsilon_i>0$ , we can choose $X,Y \in S(n)$ satisfying 
    \begin{align} \label{mtx}
        -(1+\norm{\diag{\sqrt{\epsilon_i}}Z\diag{\sqrt{\epsilon_i}}}) \begin{pmatrix}
             \diag{1/\epsilon_i} & 0 \\
             0 & \diag{1/\epsilon_i}
         \end{pmatrix}
         \leq 
        \begin{pmatrix}
             X & 0 \\
             0 &-Y
         \end{pmatrix}
         \leq 
         \begin{pmatrix}
             Z^\epsilon & -Z^\epsilon \\
             -Z^\epsilon & Z^\epsilon
         \end{pmatrix},
    \end{align}
     where
     \begin{align} \label{Zmtx}
         Z &= D^2(\phi(|L^{\beta} (x-y)|_\alpha)) \nonumber \\
         &=\frac{1}{|a|_\alpha^{2-\gamma}}\diag{\sgn{a_i}L^{\beta_i} |a_i|^{\alpha_i-1}} \left( \diag{(2\alpha_i-1)}-(2-\gamma) \left(\frac{|a_i|^{\alpha_i}}{|a|_\alpha}\right) \otimes \left( \frac{|a_i|^{\alpha_i}}{|a|_\alpha} \right)\right)\diag{\sgn{a_i}L^{\beta_i} |a_i|^{\alpha_i-1}},
     \end{align}
     and $Z^\epsilon = Z + 2Z\diag{\epsilon_i}Z$.
     Observe that 
     \begin{align*}
         A=\left(\diag{(2\alpha_i-1)}-(2-\gamma)\left( \frac{|a_i|^{\alpha_i}}{|a|_\alpha} \right) \otimes \left( \frac{|a_i|^{\alpha_i}}{|a|_\alpha} \right)\right)
     \end{align*}
     is a bounded matrix in the sense that $|A_{ij}|\leq C$ for all $i,j$.
     
     By the definition of viscosity solution, we have
     \begin{align*}
         F(X+KI, z_x, x) \geq f(x,z_x) \quad \text{and} \quad F(Y-KI, z_y, y) \leq f(y,z_y),
     \end{align*}
     and so
     \begin{align} \label{allest}
     \begin{split}
         -I_4:=-C_f(2+\sum|(z_x)_i|^{p_i+1} + |(z_y)_i|^{p_i+1}) &\leq f(x,z_x)-f(y,z_y) \\
         &\leq F(X+KI, z_x, x) -F(Y-KI, z_y, y) \\
         &= F(X+KI,z_x,x)-F(X+KI,z,x) \ (:=I_{21})\\ &+ F(X+KI,z,x)-F(X+KI,z,y) \ (:=I_3)\\ 
         &+ F(X+KI,z,y)-F(Y-KI,z,y) \ (:=I_1) \\
         &+ F(Y-KI,z,y)-F(Y-KI,z_y,y)  \ (:=I_{22}) \\
         &= I_{21} + I_3 + I_1 + I_{22}.
     \end{split}
     \end{align}
We will estimate $I_1, I_{21}, I_{22}, I_3, I_4$ to reach a contradiction.

\textbf{Estimates of $I_1$.}
We first find the estimate for $I_1$. By the assumption \ref{A1},
\begin{align*}
    I_1 &= F(X+KI,z,y)-F(Y-KI,z,y) \\
    &\leq \mathcal{M}^+(\diag{|z_i|^{p_i/2}}(X-Y+2KI)\diag{|z_i|^{p_i/2}}) \\
    &\leq \mathcal{M}^+(\diag{|z_i|^{p_i/2}}(X-Y)\diag{|z_i|^{p_i/2}}) + \sum_i 2\Lambda K|z_i|^{p_i}.
\end{align*}
Note that by applying $(b,b)$ with any vector $b \in \mathbb{R}^n$ for the matrix inequality \eqref{mtx}, we have $X-Y\leq 0$.
Thus, $\diag{|z_i|^{p_i/2}}(X-Y)\diag{|z_i|^{p_i/2}} \leq 0$ and all the eigenvalues are not positive.
Observe that
\begin{align*}
    \diag{\sgn{a_i}L^{\beta_i} (a_i)^{\alpha_i-1}}\diag{|z_i|^{p_i/2}} &=\diag{\sgn{a_i}L^{\beta_i} (a_i)^{\alpha_i-1} L^{\beta_i p_i/2}\frac{|a_i|^{(2\alpha_i-1)p_i/2}}{|a|_\alpha^{(2-\gamma)p_i/2}}} \\
    &=\diag{\sgn{a_i} L^{\beta_i(1+p_i/2)} \frac{|a_i|^{(2\alpha_i-1)(p_i/2) + \alpha_i-1}}{|a|_\alpha^{(2-\gamma)p_i/2}}} \\
    &=\diag{\sgn{a_i} L^{1/2} \frac{|a_i|^{\alpha_i k_i}}{|a|_\alpha^{k_i}}}, 
\end{align*}
where $k_i = (2-\gamma)p_i/2$.
Note that by definition of $\alpha=(\alpha_i)$ in \eqref{ab}, we have $$(2\alpha_i-1)(p_i/2) + \alpha_i -1 =  \alpha_i (2-\gamma)p_i/2 = \alpha_ik_i. $$
Therefore, we obtain
\begin{align*}
    \diag{|z_i|^{p_i/2}}Z\diag{|z_i|^{p_i/2}} = \frac{L}{|a|_\alpha^{2-\gamma}}\diag{\sgn{a_i} \frac{|a_i|^{\alpha_i k_i}}{|a|_\alpha^{k_i}}} A \diag{\sgn{a_i} \frac{|a_i|^{\alpha_i k_i}}{|a|_\alpha^{k_i}}}.
\end{align*}
Similarly to \cite{Demengel17}, for some $C_0>n$ to be determined, we define
\begin{align*}
    I = I(a,C_0) = \left\{ i \in \{ 1,\cdots,n\} : \frac{1}{\alpha_i}|a_i|^{2\alpha_i} \geq \frac{1}{C_0}|a|_\alpha^2\right\}.
\end{align*}
Since $|a|^2_\alpha = \sum_i \frac{1}{\alpha_i}|a_i|^{2 \alpha_i}$, $I$ is nonempty for $C_0>n$.
We also define
\begin{align*}
    w= \sum_{i \in I} \frac{\sgn{a_i}}{\alpha_i} \frac{|a|_\alpha^{k_i-1}}{|a_i|^{\alpha_i(k_i-1)}} e_i.
\end{align*}
Then since
$\diag{\sgn{a_i} \frac{|a_i|^{\alpha_i k_i}}{|a|_\alpha^{k_i}}}w = \sum_{i\in I} \frac{1}{\alpha_i} \frac{|a_i|^{\alpha_i}}{|a|_\alpha}e_i$, we have

\begin{align*}
    w^t\diag{|z_i|^{p_i/2}}Z\diag{|z_i|^{p_i/2}}w &= \frac{L}{|a|_\alpha^{2-\gamma}} \left(\sum_{i\in I} \frac{1}{\alpha_i} \frac{|a_i|^{\alpha_i}}{|a|_\alpha}e_i\right)^t A \left(\sum_{i\in I} \frac{1}{\alpha_i} \frac{|a_i|^{\alpha_i}}{|a|_\alpha}e_i\right) \\
    &=\frac{L}{|a|_\alpha^{2-\gamma}}\left( \sum_{i\in I} \frac{2\alpha_i-1}{\alpha_i^2}\frac{|a_i|^{2\alpha_i}}{|a|_\alpha^2}  - (2-\gamma) \left( \sum_{i\in I} \frac{1}{\alpha_i} \frac{|a_i|^{2\alpha_i}}{|a|_\alpha^2}\right)^2\right) \\
    &= \frac{L}{|a|_\alpha^{2-\gamma}} \frac{1}{|a|^4_\alpha}\left( \sum_{i\in I} \frac{2\alpha_i-1}{\alpha_i^2}|a_i|^{2\alpha_i} \sum_{j \in I} \frac{1}{\alpha_j}|a_j|^{2\alpha_j} - (2-\gamma)\sum_{i\in I} \frac{1}{\alpha_i}|a_i|^{2\alpha_i}\sum_{j\in I} \frac{1}{\alpha_j}|a_j|^{2\alpha_j} \right)  \\
    &+ \frac{L}{|a|_\alpha^{2-\gamma}} \frac{1}{|a|^4_\alpha} \sum_{i\in I} \frac{2\alpha_i-1}{\alpha_i^2}|a_i|^{2\alpha_i} \sum_{j \notin I} \frac{1}{\alpha_j}|a_j|^{2\alpha_j} \ (=:B)  \\
    &= \frac{L}{|a|_\alpha^{2-\gamma}} \frac{1}{|a|^4_\alpha} \left( \sum_{i,j \in I}   \left(\frac{2\alpha_i-1}{\alpha_i^2\alpha_j} - \frac{2-\gamma}{\alpha_i\alpha_j}\right) |\alpha_i|^{2\alpha_i}|\alpha_j|^{2\alpha_j}\right) +B \\
    &= \frac{L}{|a|_\alpha^{2-\gamma}} \frac{1}{|a|^4_\alpha} \left( \sum_{i,j \in I}   \frac{\gamma-1/\alpha_i}{\alpha_i \alpha_j} |\alpha_i|^{2\alpha_i}|\alpha_j|^{2\alpha_j}\right) +B.
\end{align*}
Since $1/\alpha_i -\gamma =\frac{1-\gamma}{1+p_i/2} >0$, we choose $c_1 =c_1(\gamma,p_n)>0$ such that $1/\alpha_i -\gamma\geq c_1$ for all $i$.
Using the fact that
\begin{align*}
    \sum_{i\in I} \frac{1}{\alpha_i}|a_i|^{2\alpha_i} \geq (1-\frac{n}{C_0})|a|^2_\alpha \quad \text{ and }\quad \sum_{i\notin I} \frac{1}{\alpha_i}|a_i|^{2\alpha_i} \leq \frac{n}{C_0}|a|^2_\alpha,
\end{align*}
we have
\begin{align*}
    w^t\diag{|z_i|^{p_i/2}}Z\diag{|z_i|^{p_i/2}}w &\leq  \frac{L}{|a|_\alpha^{2-\gamma}} \frac{1}{|a|^4_\alpha} \left( - c_1 \left( \sum_{i \in I} \frac{1}{\alpha_i}|a_i|^{2\alpha_j}\right)^2 +  2\sum_{i} \frac{1}{\alpha_i}|a_i|^{2\alpha_j} \sum_{j \notin I} \frac{1}{\alpha_i}|a_i|^{2\alpha_j}\right) \\
    &\leq \frac{L}{|a|_\alpha^{2-\gamma}} \left( -c_1\left(1-\frac{n}{C_0} \right)^2 + \frac{2}{C_0}\right) \\
    &\leq -\frac{c_1}{2}\frac{L}{|a|_\alpha^{2-\gamma}}.
\end{align*}
by choosing large $C_0 =C_0(n,c_1)>n$.
Moreover, since
    $w^tw = \sum_{i\in I} \frac{1}{\alpha_i^2}\frac{|a|_\alpha^{2(k_i-1)}}{|a_i|^{2\alpha_i(k_i-1)}} \leq CC_0^{k_n-1}$, we obtain
\begin{align*}
    \frac{w^t\diag{|z_i|^{p_i/2}}Z\diag{|z_i|^{p_i/2}}w}{w^tw} \leq -\frac{c_1}{CC_0^{k_n-1}}\frac{L}{|a|_\alpha^{2-\gamma}}.
\end{align*}
For some large $C_2>1$, we define $\epsilon=(\epsilon_i)$ as
\begin{align} \label{epsilon}
    \frac{1}{\epsilon_i} = C_2 \left( 1+ \frac{L^{2\beta_i}|a_i|^{2(\alpha_i-1)}}{|a|_\alpha^{2-\gamma}}\right).
\end{align}
Then we have
\begin{align*}
     \diag{|z_i|^{p_i/2}}Z\diag{\epsilon_i} Z\diag{|z_i|^{p_i/2}} &=\frac{L}{C_2}\diag{\sgn{a_i} \frac{|a_i|^{\alpha_i k_i}}{|a|_\alpha^{k_i}}} A \diag{\frac{ L^{2\beta_i}|a_i|^{2(\alpha_i-1)}}{|a|^{2-\gamma}_\alpha+L^{2\beta_i}|a_i|^{2(\alpha_i-1)}}}A \diag{\sgn{a_i} \frac{|a_i|^{\alpha_i k_i}}{|a|_\alpha^{k_i}}}.
\end{align*}
Note that every matrix in right hand side is bounded, to find that
\begin{align*}
    \left|\frac{w^t\diag{|z_i|^{p_i/2}}2Z\diag{\epsilon_i} Z\diag{|z_i|^{p_i/2}}w}{w^tw} \right|\leq C\frac{L}{C_2} \leq \frac{c_1}{2CC_0^{k_n-1}}\frac{L}{|a|_\alpha^{2-\gamma}},
\end{align*}
by choosing $C_2=C_2(C_0,c_1,\gamma,p_n)>1$ sufficiently large.
Applying $\left(\diag{|z_i|^{p_i/2}}w,-\diag{|z_i|^{p_i/2}}w\right)$ to the matrix inequality \eqref{mtx}, we have
\begin{align*}
    \frac{w^t\diag{|z_i|^{p_i/2}}(X-Y)\diag{|z_i|^{p_i/2}}w}{w^tw} \leq \frac{w^t\diag{|z_i|^{p_i/2}}4Z^\epsilon\diag{|z_i|^{p_i/2}}w}{w^tw} \leq -\frac{1}{C}\frac{L}{|a|_\alpha^{2-\gamma}}.
\end{align*}
This implies that  $\diag{|z_i|^{p_i/2}}(X-Y)\diag{|z_i|^{p_i/2}}$ has at least one eigenvalue smaller than $-\frac{1}{C}\frac{L}{|a|_\alpha^{2-\gamma}}$.
Since $X-Y \leq 0$, there exists some $c_3>0$ such that
\begin{align*} 
    \mathcal{M}^+(\diag{|z_i|^{p_i/2}}(X-Y)\diag{|z_i|^{p_i/2}}) \leq -c_3\frac{L}{|a|_\alpha^{2-\gamma}}.
\end{align*}
Our goal is to prove that every other term in \eqref{allest} is `smaller' than $\frac{L}{|a|_\alpha^{2-\gamma}}$ for large $L>1$ to get a contradiction.
For example, using \eqref{zest} we get 
\begin{align} \label{ziest}
    |z_i| \leq CL^{\beta_i} \frac{|a|^{(2\alpha_i-1)/\alpha_i}_\alpha}{|a|^{2-\gamma}_\alpha}  \leq C \frac{L^{\beta_i}}{|a|_\alpha^{1/\alpha_i - \gamma}} ,
\end{align}
by which we have
\begin{align*}
    \sum_i 2\Lambda K|z_i|^{p_i} \leq C\sum_i\frac{L^{p_i\beta_i}}{|a|_\alpha^{p_i(1/\alpha_i-\gamma)}} = C\sum_i \frac{L^{\frac{p_i/2}{1+p_i/2}}}{|a|_\alpha^{\frac{p_i}{2+p_i}(2-2\gamma)}}.
\end{align*}
Since $\frac{p_i/2}{1+p_i/2} < 1,\  \frac{p_i}{2+p_i}(2-2\gamma) <2-\gamma$, each summand is smaller than $\frac{L}{C_L|a|_\alpha^{2-\gamma}}$ where $C_L>1$ is arbitrarily large provided $L>1$ is large enough.
Therefore, we obtain
\begin{align*}
    I_1 \leq -c_3\frac{L}{|a|_\alpha^{2-\gamma}} +\frac{L}{C_L|a|_\alpha^{2-\gamma}}.
\end{align*}
\textbf{Estimates of $I_{21},I_{22}$.}
We now estimate $|I_{21}|,|I_{22}|$ and show that they are smaller than $\frac{L}{|a|_\alpha^{2-\gamma}}$ for large $L>1$.
We write $d = K(x-x_0)$, then we have $|d| \leq K/2$. 
We also write $z^j := (z_1+d_1, \cdots, z_j+d_j,z_{j+1},\cdots,z_n)$ for $j=0,\cdots,n$.
Note that $z^0=z, \ z^n=z+d$.
By the assumption \ref{A2}, we have
\begin{align} \label{grad}
    |I_{21}| &= |F(X+KI,z+d,x)-F(X+KI,z,x)|  \nonumber\\
    &\leq \sum_i |F(X+KI,z^i,x)-F(X+KI,z^{i-1},x)|\nonumber \\
    &\leq \Lambda\sum_i\left||z_i+d_i|^{p_i}-|z_i|^{p_i}\right|(|X_{ii}|+K) \nonumber\\
    &+\Lambda\sum_{i< j} \left||z_i+d_i|^{p_i/2}-|z_i|^{p_i/2}\right||z_j|^{p_j/2}|X_{ij}| + \Lambda \sum_{i>j} \left||z_i+d_i|^{p_i/2}-|z_i|^{p_i/2}\right||z_j+d_j|^{p_j/2}|X_{ij}| .
\end{align}
We now estimates $|X_{ii}|,|X_{ij}|$ by the matrix inequality \eqref{mtx}.
Using \eqref{Zmtx} and \eqref{epsilon}, we have
\begin{align*}
    \diag{\sqrt{\epsilon_i}}Z\diag{\sqrt{\epsilon_i}} = \frac{1}{C_2} \diag{\frac{\sgn{a_i} L^{\beta_i} |a_i|^{\alpha_i-1}}{\sqrt{|a|^{2-\gamma}_\alpha+L^{2\beta_i}|a_i|^{2(\alpha_i-1)}}}} A\diag{\frac{\sgn{a_i}L^{\beta_i} |a_i|^{\alpha_i-1}}{\sqrt{|a|^{2-\gamma}_\alpha+L^{2\beta_i}|a_i|^{2(\alpha_i-1)}}}} .
\end{align*}
Since each matrix on the right-hand side is bounded, we have $\norm{\diag{\sqrt{\epsilon_i}}Z\diag{\sqrt{\epsilon_i}}} \leq C$.
Moreover, by \eqref{Zmtx} and the fact that
\begin{align*}
    Z\diag{\epsilon_i} Z &=\frac{1}{C_2}\diag{\sgn{a_i}L^{\beta_i} |a_i|^{\alpha_i-1}} A \diag{\frac{L^{2\beta_i}|a_i|^{2(\alpha_i-1)}}{|a|^{2-\gamma}_\alpha+L^{2\beta_i}|a_i|^{2(\alpha_i-1)}}}A \diag{\sgn{a_i}L^{\beta_i} |a_i|^{\alpha_i-1}},
\end{align*}
we have
\begin{align} \label{Zentry}
    |Z^\epsilon_{ij}|\leq \frac{C}{|a|_\alpha^{2-\gamma}}(L^{\beta_i} |a_i|^{\alpha_i-1})(L^{\beta_j} |a_j|^{\alpha_j-1}).
\end{align}
By applying $(e_i,0)$ to the matrix inequality \eqref{mtx}, we get $-C/\epsilon_{i} \leq X_{ii} \leq Z^\epsilon_{ii}$ which implies
\begin{align} \label{Xiiest}
    |X_{ii}| \leq C\left( 1+ \frac{L^{2\beta_i}|a_i|^{2(\alpha_i-1)}}{|a|_\alpha^{2-\gamma}}\right).
\end{align}
In order to get estimates of $|X_{ij}|$ for some fixed $i \neq j$, we apply $(b_je_i + b_ie_j,0)$ and $(b_je_i - b_ie_j,0)$ to the matrix inequality \eqref{mtx} with
\begin{align*}
    b_i = \sqrt{ 1+ \frac{L^{2\beta_i}|a_i|^{2(\alpha_i-1)}}{|a|_\alpha^{2-\gamma}} } , \qquad b_j = \sqrt{ 1+ \frac{L^{2\beta_j}|a_j|^{2(\alpha_j-1)}}{|a|_\alpha^{2-\gamma}} }.
\end{align*}
Then we obtain
\begin{align*}
    -Cb_j^2/\epsilon_{i} -Cb_i^2/\epsilon_{j} \leq b_j^2X_{ii} +2b_ib_jX_{ij} + b_i^2X_{jj} \leq b_j^2Z^\epsilon_{ii} +2b_ib_jZ^\epsilon_{ij} +b_i^2Z^\epsilon_{jj}, \\
    -Cb_j^2/\epsilon_{i} -Cb_i^2/\epsilon_{j} \leq b_j^2X_{ii} -2b_ib_jX_{ij} + b_i^2X_{jj} \leq b_j^2Z^\epsilon_{ii} -2b_ib_jZ^\epsilon_{ij} +b_i^2Z^\epsilon_{jj}.
\end{align*}
By subtracting the above inequalities, we get
\begin{align*}
    4|b_ib_jX_{ij}| \leq |b_j^2Z^\epsilon_{ii}| +2|b_ib_jZ^\epsilon_{ij}| +|b_i^2Z^\epsilon_{jj}| + Cb_j^2/\epsilon_{i} +Cb_i^2/\epsilon_{j}.
\end{align*}
Note that using \eqref{Zentry} and \eqref{epsilon}, we obtain
\begin{align*}
    |b_j^2Z^\epsilon_{ii}| +2|b_ib_jZ^\epsilon_{ij}| +|b_i^2Z^\epsilon_{jj}| + Cb_j^2/\epsilon_{i} +Cb_i^2/\epsilon_{j} \leq C\left(1+\frac{L^{2\beta_i}|a_i|^{2(\alpha_i-1)}}{|a|_\alpha^{2-\gamma}}\right) \left(1+\frac{L^{2\beta_j}|a_j|^{2(\alpha_j-1)}}{|a|_\alpha^{2-\gamma}}\right),
\end{align*}
which implies
\begin{align} \label{Xijest}
    |X_{ij}| \leq C\left( 1+ \frac{L^{\beta_i}|a_i|^{\alpha_i-1}}{|a|_\alpha^{1-\gamma/2}}\right) \left( 1+ \frac{L^{\beta_j}|a_j|^{\alpha_j-1}}{|a|_\alpha^{1-\gamma/2}}\right).
\end{align}
We denote $X_i := 1+ \frac{L^{\beta_i}|a_i|^{\alpha_i-1}}{|a|_\alpha^{1-\gamma/2}} \geq 1$ so that $|X_{ij}| \leq CX_iX_j$.
Note that \eqref{Xijest} for $i=j$ is also true by \eqref{Xiiest}.
For $0< \theta \leq p_i/2$, by using \eqref{ziest} and the fact $1/\alpha_i-\gamma>0$, we obtain
\begin{align} \label{ziXiest}
    |z_i|^{p_i/2-\theta}|X_i| &\leq C\frac{L^{\beta_i (p_i/2-\theta)} }{|a|_\alpha^{(1/\alpha_i-\gamma)(p_i/2-\theta)}}\left(1+ \frac{L^{\beta_i} |a|^{1-1/\alpha_i}_\alpha }{|a|_\alpha^{1-\gamma/2}} \right) \nonumber\\ 
    &\leq \frac{C|a|_\alpha^{(1/\alpha_i-\gamma)\theta}}{L^{\beta_i\theta}} \cdot \frac{L^{\beta_i(1+p_i/2)} }{|a|_\alpha^{(1/\alpha_i-\gamma)(1+p_i/2)+\gamma/2}} \nonumber\\
    &\leq \frac{L^{1/2} }{C_L|a|_\alpha^{1-\gamma/2}},
\end{align}
where $C_L>0$ becomes arbitrarily large when $L$ is large enough.
Observe that if $\theta=0$, then we just have
\begin{align} \label{ziXiest2}
    |z_i|^{p_i/2}|X_i| \leq C_4\frac{L^{1/2} }{|a|_\alpha^{1-\gamma/2}},
\end{align} where $C_4>0$ is independent of $L$.
We use the above estimates to get the estimate for $I_{21}$.
If $|z_i|\geq K$ and $p_i\geq1$, then by \eqref{ziXiest} we have
\begin{align*}
    J_1:=\left||z_i+d_i|^{p_i}-|z_i|^{p_i}\right|(|X_{ii}|+K) &\leq C |z_i|^{p_i-1}X_i^2 \\
    &\leq C(|z_i|^{p_i/2-1/2}X_i)^2 \\
    &\leq \frac{L}{C_L|a|_\alpha^{2-\gamma}}.
\end{align*}
If $|z_i| \leq K$ or $0<p_i \leq 1$, by $\left||z_i+d_i|^{p_i}-|z_i|^{p_i}\right| \leq C$ we get
\begin{align*}
    J_1 \leq C|X_{i}|^2 
    \leq \frac{L}{C_L|a|_\alpha^{2-\gamma}}.
\end{align*}
Note that for $p_i=0$, then we have just $J_1=0$.

In order to obtain the estimates for $I_{21}$ in \eqref{grad} with $|X_{ij}|$ parts, using $|X_{ij}| \le CX_iX_j$, we find the estimates of
$\left||z_i+d_i|^{p_i/2}-|z_i|^{p_i/2}\right|X_i$ and 
$|z_j|^{p_j/2}X_j$ (also $|z_j+d_j|^{p_j/2}X_j$) separately, and then product them.
If $|z_i| \geq K$ and $p_i\geq2$, then we have
\begin{align*}
    J_2:=\left||z_i+d_i|^{p_i/2}-|z_i|^{p_i/2}\right|X_i \leq C|z_i|^{p_i/2-1} X_i \leq \frac{L^{1/2}}{C_L|a|_\alpha^{1-\gamma/2}}.
\end{align*}
If $|z_i| \leq K$ or $0 < p_i \leq 2$, then we get
\begin{align*}
    J_2 \leq CX_{i} \leq \frac{L^{1/2}}{C_L|a|_\alpha^{1-\gamma/2}}.
\end{align*}
Also, we have $J_2=0$ when $p_i=0$.
Moreover, if $|z_j| \geq K$, then since $|z_j+d_j| \leq C|z_j|$, we use \eqref{ziXiest2} to have
\begin{align*}
    J_3:=|z_j+d_j|^{p_j/2}X_j \leq C|z_j|^{p_j/2} X_j \leq C_4\frac{L^{1/2}}{|a|_\alpha^{1-\gamma/2}}.
\end{align*}
Observe that $J_4=|z_j|^{p_j/2}X_j$ has the same estimates as above.
If $|z_j| \leq K$, then by $|z_j+d_j| \leq C$ we get
\begin{align*}
    J_3\leq C X_j \nonumber \leq C_4\frac{L^{1/2}}{|a|_\alpha^{1-\gamma/2}}.
\end{align*}
which also holds when $p_i=0$.
Note that $I_{21}$ can be bounded by \eqref{grad} which consists of $J_1$, $J_2 J_3$ and $J_2 J_4$.
Even though $J_3$ and $J_4$ may not be smaller than $\frac{L^{1/2}}{C|a|_\alpha^{1-\gamma/2}}$ for some large $C>1$, but because of $J_2$, the products of them are smaller than $\frac{L}{C_L|a|_\alpha^{2-\gamma}}$.
By the same computation for $I_{22}$, we obtain
\begin{align*}
    |I_{21}|, |I_{22}| \leq \frac{L}{C_L|a|_\alpha^{2-\gamma}}.
\end{align*}
\textbf{Estimates of $I_{3}$.} We now get the estimate for $I_{3}$.
Note that by \eqref{adelta}, we get 
$|\delta| = |x-y| \leq CL^{-\beta_n}$.
Using the assumption \ref{A3}, we have
\begin{align*}
    |I_3| &= |F(X+KI,z,x)-F(X+KI,z,y) | \\
    &\leq C|x-y|^\chi\sum_{i,j}|z_i|^{p_i/2}|z_j|^{p_j/2}|X_{ij}+K\delta_{ij}| \\
    &\leq CL^{-\beta_n\chi}\sum_{i,j}(|z_i|^{p_i/2}|X_i|)(|z_j|^{p_j/2}|X_{j}|) \\
    &\leq CL^{-\beta_n\chi}\frac{L }{|a|_\alpha^{2-\gamma}} \ \leq \frac{L }{C_L|a|_\alpha^{2-\gamma}},
\end{align*}
where we used $|X_{ij}| \leq CX_iX_j$ and \eqref{ziXiest2}.

\textbf{Conclusion.} Note that by \eqref{ziest} we have
\begin{align*}
    |I_4|&=C_f\left(2+\sum|(z_x)_i|^{p_i+1} + |(z_y)_i|^{p_i+1}\right) \leq C\left(1+\sum|z_i|^{p_i+1}\right) \\
    &\leq C\left(1+\sum_i\frac{L^{(p_i+1)\beta_i}}{|a|_\alpha^{(p_i+1)(1/\alpha_i-\gamma)}} \right)\leq  C \left( 1+ \sum_i \frac{L^{\frac{1/2+p_i/2}{1+p_i/2}}}{|a|_\alpha^{\frac{1/2+p_i/2}{1+p_i/2}(2-2\gamma)}} \right) \\
    &\leq \frac{L }{C_L|a|_\alpha^{2-\gamma}}.
\end{align*}
Gathering all the estimates above into \eqref{allest}, we obtain
\begin{align*}
       c_3\frac{L}{|a|_\alpha^{2-\gamma}} \leq -I_1 \leq |I_{21}| + |I_{22}| +|I_{3}| +|I_{4}| \leq \frac{L }{C_L|a|_\alpha^{2-\gamma}}.
\end{align*}
By choosing large enough $L>1$ so that $1/C_L<c_3  $, we get a contradiction.
Therefore, we conclude that $u$ is $C^\gamma$ for any $\gamma<1$.
\end{proof}

\section{Lipschitz estimates for anisotropic equations} \label{sec4}
We now prove the Lipschitz continuity of $u$ using the $ C^\gamma$-regularity for any $\gamma<1$ obtained in Section \ref{sec3}.
The proof is simpler than H\"older continuity; we just consider the isotropic distance function $|x|^2 = x_1^2 + \cdots + x_n^2$ and use the standard Ishii-Lions method.

\begin{proof}[Proof of Theorem \ref{Main}.] 
As before, we claim that 
\begin{align*}
    M= \max_{x,y\in \overline{B_1}} \left\{ u(x) - u(y) - L\phi(|x-y|) - \frac{K}{2}|x-x_0|^2 -\frac{K}{2}|y-x_0|^2 \right\} \leq 0,
\end{align*}
where
\begin{align*}
    \phi(t) = 
    \begin{cases}
        t-\frac{1}{1+\kappa}t^{1+\kappa} &\text{for } t\in[0,1] \\
        1-\frac{1}{1+\kappa} &\text{for } t>1,
    \end{cases}
\end{align*}
for some small $0<\kappa<1$.
Explicitly, we choose small $0<\kappa$ and $\gamma<1$ close to $1$ such that
\begin{align*}
    \min\{\gamma/4, \ p_m\gamma/4,  \ \chi\}  \geq (2+p_n)\kappa +p_n(1-\gamma), 
\end{align*}
where $p_m$ is the smallest number of $(p_i)$ which is not 0.
We prove this by contradiction assuming $M>0$ for any large $L, K>0$.
We have $x \neq y$ and since $u \in C^\gamma$ for any $\gamma<1$, we get
\begin{align*}
    L \phi(|x-y|) + \frac{K}{2}|x-x_0|^2 +\frac{K}{2}|y-x_0|^2  \leq \norm{u}_{C^\gamma}\delta^{\gamma},
\end{align*}
where $\delta=|x-y| \neq 0$.
By choosing large $K>32\norm{u}_{C^\gamma}$, we have $x,y \in B_1$ and
\begin{align} \label{xx0est}
    |x-x_0| < \frac{1}{4}\delta^{\gamma/2} \quad \text{ and } \quad  |y-y_0| < \frac{1}{4}\delta^{\gamma/2}.
\end{align}
Furthermore, since $L\phi(\delta) \leq L\delta$, we have
\begin{align*}
    \delta \leq CL^{-\frac{1}{1-\gamma}}.
\end{align*}
We apply the standard Ishii-Lions lemma to $u(x)- \frac{K}{2}|x-x_0|^2$ and $-u(y) -\frac{K}{2}|y-x_0|^2$, in order to find $X,Y \in S(n)$ such that
     \begin{align*}
         (z_x, X+KI)\in \overline{\mathcal{J}}^{2,+} \left( u(x)\right) \quad \text{and} \quad
         (z_y, Y-KI)\in \overline{\mathcal{J}}^{2,-} \left( u(y)\right),
     \end{align*}
     where
     \begin{align*}
         z_x = z+K(x-x_0) \quad \text{and} \quad z_y = z-K(y-x_0),
     \end{align*}
    with
     \begin{align} \label{zest2}
         z= L\phi'(|x-y|)\frac{x-y}{|x-y|} = L(1-\delta^{\kappa}) a \quad \text{where} \quad a= \frac{x-y}{|x-y|}.
         \end{align}   
Moreover, for any small $\epsilon< 1$, we can choose $X,Y \in S(n)$ satisfying
\begin{align} \label{mtx2}
    -\left(\frac{1}{\epsilon} + \norm{Z}\right) 
    \begin{pmatrix}
        I & 0 \\
        0 &I
    \end{pmatrix}
    \leq
    \begin{pmatrix}
        X & 0 \\
        0 &-Y
    \end{pmatrix}
    \leq
    \begin{pmatrix}
        Z^\epsilon & -Z^\epsilon \\
        -Z^\epsilon & Z^\epsilon
    \end{pmatrix},
\end{align}
where
\begin{align*}
    Z =\frac{L}{\delta}\left( -\kappa\delta^{\kappa} a\otimes a + (1-\delta^{\kappa})(I-a\otimes a) \right),
\end{align*}
and $Z^\epsilon = Z+ 2\epsilon Z^2$.
Note that $\norm{Z} \leq 2L/\delta$.
By the definition of viscosity solution, we have $F(X+KI, z_x, x) \geq f(x,z_x)$ and $F(Y-KI, z_y, y) \leq f(y,z_y)$.
We use the same computation as in \eqref{allest} and estimates $I_1$, $I_{21}$, $I_{22}$, $I_{3}$ and $I_{4}$.

\textbf{Estimates of $I_1$.}
We first find the estimate for $I_1$. By the assumption \ref{A1},
\begin{align*}
    I_1 &= F(X+KI,z,y)-F(Y-KI,z,y) \\
    &\leq \mathcal{M}^+(\diag{|z_i|^{p_i/2}}(X-Y)\diag{|z_i|^{p_i/2}}) + \sum_i 2\Lambda K|z_i|^{p_i}.
\end{align*}
Likewise before, we have $X-Y \leq 0$, and so $\diag{|z_i|^{p_i/2}}(X-Y)\diag{|z_i|^{p_i/2}} \leq 0$ and all the eigenvalues are nonpositive.
For some $C_0>2n$, we define
\begin{align*}
    I = I(a,C_0) = \left\{ i \in \{ 1,\cdots,n\} : |a_i|^{2} \geq \frac{\delta^{\kappa}}{C_0}\right\}.
\end{align*}
Since $|a| = 1$, $I$ is nonempty. We define 
\begin{align*}
    w= \sum_{i \in I} \frac{a_i}{(L(1-\delta^{\kappa})|a_i|)^{p_i/2}} e_i.
\end{align*}

Then using \eqref{zest2} and the fact that $\sum_i |a_i|^2 =1$, we get
\begin{align*}
    w^t\diag{|z_i|^{p_i/2}}Z\diag{|z_i|^{p_i/2}}w &= \frac{L}{\delta}\left( \sum_{i \in I} a_ie_i \right)\left( -\kappa\delta^{\kappa} a\otimes a + (1-\delta^{\kappa})(I-a\otimes a) \right)\left( \sum_{i \in I} a_ie_i \right) \\
    &= \frac{L}{\delta}\left( \sum_{i \in I} |a_i|^2 \right)\left( -\kappa\delta^{\kappa}  \sum_{i \in I} |a_i|^2  + (1-\delta^{\kappa})\left(1-\sum_{i \in I} |a_i|^2 \right) \right) \\
    &= \frac{L}{\delta}\left( \sum_{i \in I} |a_i|^2 \right)\left( -\kappa\delta^{\kappa} \sum_{i \in I} |a_i|^2 + (1-\delta^{\kappa})\sum_{i \notin I} |a_i|^2 \right) \\
    &\leq \frac{L}{\delta}\left( \sum_{i \in I} |a_i|^2 \right)\left( -\kappa\delta^{\kappa} \sum_{i \in I} |a_i|^2 + \frac{n\delta^{\kappa}}{C_0} \right) \\
    &\leq \frac{L}{\delta^{1-\kappa}}\left( 1-\frac{ n\delta^{\kappa}}{C_0}\right) \left( -\kappa \left( 1-\frac{ n\delta^{\kappa}}{C_0}\right) + \frac{n}{C_0} \right) \\
    &\leq -\frac{\kappa}{2}\frac{L}{\delta^{1-\kappa}},
\end{align*}
by choosing large $C_0=C_0(n,\kappa)>2n$.
Since
\begin{align*}
    w^tw = \sum_{i\in I} \frac{|a_i|^2}{(L(1-\delta^{\kappa})|a_i|)^{p_i}} \leq \frac{CC_0^{p_n/2}}{\delta^{\kappa p_n/2}},
\end{align*}
we get
\begin{align*}
    \frac{w^t\diag{|z_i|^{p_i/2}}Z\diag{|z_i|^{p_i/2}}w}{w^tw} \leq -c_1\frac{L}{\delta^{1-(1+p_n/2)\kappa}},
\end{align*}
for some small $c_1 = c_1(n,\kappa,p_n,C_0)>0$.
For some large $C_2>1$, we define $\epsilon= \frac{\delta^{1+(1+p_n/2)\kappa}}{C_2L}>0$.
Then we get
\begin{align*}
    \epsilon Z^2 = \frac{L}{C_2\delta^{1-(1+p_n/2)\kappa}}\left( -\kappa\delta^{\kappa} a\otimes a + (1-\delta^{\kappa})(I-a\otimes a) \right)^2.
\end{align*}
Since the above matrix is bounded, we have 
\begin{align*}
    \left|\frac{w^t\diag{|z_i|^{p_i/2}}\epsilon Z^2\diag{|z_i|^{p_i/2}}w}{w^tw} \right|\leq \frac{L}{C_2\delta^{1-(1+p_n/2)\kappa}}.
\end{align*}
Therefore, applying $\left(\diag{|z_i|^{p_i/2}}w,-\diag{|z_i|^{p_i/2}}w\right)$ to the matrix inequality \eqref{mtx2}, we obtain
\begin{align*}
    \frac{w^t\diag{|z_i|^{p_i/2}}(X-Y)\diag{|z_i|^{p_i/2}}w}{w^tw} \leq \frac{w^t\diag{|z_i|^{p_i/2}}4Z^\epsilon\diag{|z_i|^{p_i/2}}w}{w^tw} \leq -\frac{L}{C\delta^{1-(1+p_n/2)\kappa}},
\end{align*}
where $C_2=C_2(c_1)>1$ is large enough.
Thus, there exists $c_3>0$ such that
\begin{align*} 
    \mathcal{M}^+(\diag{|z_i|^{p_i/2}}(X-Y)\diag{|z_i|^{p_i/2}}) \leq - c_3\frac{L}{\delta^{1-(1+p_n/2)\kappa}}.
\end{align*}
Note that by \eqref{zest2} we have $|z_i| \leq L$ so that 
\begin{align*}
    \sum_i 2\Lambda K|z_i|^{p_i} &\leq CL^{p_n} \leq CL^{p_n-1}\delta^{1-(1+p_n/2)\kappa} \frac{L}{\delta^{1-(1+p_n/2)\kappa}} \\
    &\leq CL^{-\frac{1}{1-\gamma}(1-(1+p_n/2)\kappa - (p_n-1) (1-\gamma))}\frac{L}{\delta^{1-(1+p_n/2)\kappa}} \\
    &\leq \frac{L}{C_L\delta^{1-(1+p_n/2)\kappa}},
\end{align*}
where we use $1-(1+p_n/2)\kappa - (p_n-1) (1-\gamma)>0$.
Therefore, we obtain
\begin{align*}
    I_1 \leq -c_3\frac{L}{\delta^{1-(1+p_n/2)\kappa}} + \frac{L}{C_L\delta^{1-(1+p_n/2)\kappa}}.
\end{align*}
\textbf{Estimates of $I_{21},I_{22}$.}
We now estimate $|I_{21}|,|I_{22}|$ and show that they are `smaller' than $\frac{L}{C_L\delta^{1-(1+p_n/2)\kappa}}$.
We write $d = K(x-x_0)$, then from \eqref{xx0est}, we have $|d_i| \leq \delta^{\gamma/2} K/2$. 
By the assumption \ref{A2}, we have
\begin{align} \label{grad2}
    |I_{21}| &= |F(X+KI,z+d,x)-F(X+KI,z,x)|  \nonumber\\
    &\leq \Lambda\sum_i\left||z_i+d_i|^{p_i}-|z_i|^{p_i}\right|(|X_{ii}|+K) \nonumber\\
    &+\Lambda\sum_{i< j} \left||z_i+d_i|^{p_i/2}-|z_i|^{p_i/2}\right||z_j|^{p_j/2}|X_{ij}| + \Lambda \sum_{i>j} \left||z_i+d_i|^{p_i/2}-|z_i|^{p_i/2}\right||z_j+d_j|^{p_j/2}|X_{ij}|. 
\end{align}
Note that $\frac{1}{\epsilon} + \norm{Z} \leq \frac{CL}{\delta^{1+(1+p_n/2)\kappa}}$ and $|Z_{ij}^\epsilon| \leq \frac{CL}{\delta}$, to find
\begin{align} \label{Xijest2}
    |X_{ij}| \leq \frac{CL}{\delta^{1+(1+p_n/2)\kappa}}.
\end{align}
For some $\eta \in  \mathbb{R}$ and $ \theta > (2+p_n)\kappa + \eta(1-\gamma)$, we obtain
\begin{align} \label{dLXijest}
    \delta^\theta L^\eta(|X_{ij}|+1) &\leq  CL^\eta \delta^{\theta - (2+p_n)\kappa}\frac{L}{\delta^{1-(1+p_n/2)\kappa}} \nonumber \\
    &\leq CL^{-\frac{1}{1-\gamma}(\theta -(2+p_n)\kappa - \eta (1-\gamma))} \frac{L}{\delta^{1-(1+p_n/2)\kappa}} \nonumber \\
    &\leq \frac{L}{C_L\delta^{1-(1+p_n/2)\kappa}},
\end{align}
where $C_L>0$ becomes arbitrarily large when $L$ is large enough.
Furthermore, since $|d_i| \leq \delta^{\gamma/2} K/2$, we have
\begin{align*}
    \left||z_i+d_i|^{p_i}-|z_i|^{p_i}\right| \leq 
    \begin{cases}
        C\delta^{\gamma/2} |z_i|^{p_i-1} \qquad &\text{ if } |z_i| \geq \delta^{\gamma/2} K \text{ and } p_i \geq 1, \\
        C\delta^{p_i\gamma/2} \qquad &\text{ if } |z_i| \leq \delta^{\gamma /2} K \text{ or } 0<p_i \leq 1.
    \end{cases}
\end{align*}
If $p_i=0$, then the above quantity is just 0.
If $|z_i|\geq \delta^{\gamma/2} K$ and $p_i\geq1$, then since $|z_i| \leq L$, we have
\begin{align*}
    J_1:=\left||z_i+d_i|^{p_i}-|z_i|^{p_i}\right|(|X_{ii}|+K) \leq C\delta^{\gamma/2} L^{p_i-1}(|X_{ii}|+1) 
    \leq \frac{L}{C_L\delta^{1-(1+p_n/2)\kappa}}, 
\end{align*}
where we used \eqref{dLXijest} and the fact $\gamma/2  > (2+p_n)\kappa +p_n(1-\gamma)$.
If $|z_i|\leq \delta^{\gamma/2} K$ or $0<p_i\leq 1$, then
\begin{align*}
    J_1 \leq C\delta^{p_i\gamma /2}(|X_{ii}|+1)
    \leq \frac{L}{C_L\delta^{1-(1+p_n/2)\kappa}} ,
\end{align*}
since $p_m\gamma/2 > (2+p_n)\kappa $.
Recall that $p_m$ is the smallest number of $(p_i)$ which is not $0$.
If $|z_i|\geq \delta^{\gamma/2} K$, $p_i\geq2$ and $|z_j|\geq \delta^{\gamma/2} K$, then since $ |z_j+d_j| \leq C|z_j|$, we have
\begin{align*}
    J_2:=\left||z_i+d_i|^{p_i/2}-|z_i|^{p_i/2}\right||z_j+d_j|^{p_j/2}|X_{ij}| \leq C\delta^{\gamma/2}L^{p_i/2+p_j/2-1}|X_{ij}| 
    \leq \frac{L}{C_L\delta^{1-(1+p_n/2)\kappa}} .
\end{align*}
If $|z_i| \leq \delta^{\gamma/2} K$ or $0<p_i \leq 2$ and if $|z_j| \geq \delta^{\gamma/2} K$, then
\begin{align*}
    J_2 \leq C\delta^{p_i\gamma/4}L^{p_j/2}|X_{ij}| 
    \leq \frac{L}{C_L\delta^{1-(1+p_n/2)\kappa}} ,
\end{align*}
since $p_m\gamma/4  >(2+p_n)\kappa +p_n(1-\gamma)/2 $.
Note that the above two estimates are also true for $J_3:= \left||z_i+d_i|^{p_i/2}-|z_i|^{p_i/2}\right||z_j|^{p_j/2}|X_{ij}|$.
If $|z_i| \geq \delta^{\gamma/2} K$, $p_i\geq2$ and $|z_j| \leq \delta^{\gamma/2} K$, then
\begin{align*}
    J_2 \leq C\delta^{\gamma/2+p_j\gamma/4}L^{p_i/2-1}|X_{ij}| 
    \leq \frac{L}{C_L\delta^{1-(1+p_n/2)\kappa}}.
\end{align*}
If $|z_i| \leq \delta^{\gamma/2} K$ or $0<p_i\leq2$ and if $|z_j| \leq \delta^{\gamma/2} K$, then
\begin{align*}
    J_2\leq C\delta^{(p_i+p_j)\gamma/4}|X_{ij}| 
    \leq \frac{L}{C_L\delta^{1-(1+p_n/2)\kappa}}.
\end{align*}
Recall that $I_{21}$ can be bounded by \eqref{grad} which consists of $J_1$, $J_2$ and $J_3$, and $I_{22}$ can be estimated by similar computation.
Therefore, we obtain
\begin{align*}
    |I_{21}|, |I_{22}| \leq \frac{L}{C_L\delta^{1-(1+p_n/2)\kappa}}. 
\end{align*}
\textbf{Estimates of $I_{3}$.} We now get the estimate for $I_{3}$.
Using the assumption \ref{A3}, we have
\begin{align*}
    |I_3| &= |F(X+KI,z,x)-F(X+KI,z,y) | \\
    &\leq C|x-y|^\chi\sum_{i,j}|z_i|^{p_i/2}|z_j|^{p_j/2}|X_{ij}+K\delta_{ij}| \\
    &\leq C\delta^{\chi}\sum_{i,j}L^{p_n}(|X_{ij}|+1) \\
    &\leq \frac{L}{C_L\delta^{1-(1+p_n/2)\kappa}},
\end{align*}
by the fact $\chi > (2+p_n)\kappa + p_n(1-\gamma) $ and \eqref{dLXijest}.

\textbf{Conclusion.} Note that by \eqref{ziest} we have
\begin{align*}
    |I_4|&=C_f\left(2+\sum|(z_x)_i|^{p_i+1} + |(z_y)_i|^{p_i+1}\right) \leq C\left(1+\sum L^{p_i+1}\right) \\
    &\leq CL^{-\frac{1}{1-\gamma}(1-(1+p_n/2)\kappa - p_n (1-\gamma))}\frac{L}{\delta^{1-(1+p_n/2)\kappa}} \\
    &\leq \frac{L}{C_L\delta^{1-(1+p_n/2)\kappa}},
\end{align*}
since $1-(1+p_n/2)\kappa - p_n (1-\gamma)>0$.
Combining all the estimates above with \eqref{allest}, we obtain
\begin{align*}
       c_3\frac{L}{\delta^{1-(1+p_n/2)\kappa}} \leq -I_1 \leq |I_{21}| + |I_{22}| +|I_{3}| +|I_{4}| \leq \frac{L}{C_L\delta^{1-(1+p_n/2)\kappa}}.
\end{align*}
By choosing large enough $L>1$ so that $1/C_L<c_3 $, we finally reach a contradiction.
Consequently, we conclude that $u$ is Lipschitz continuous.

\end{proof}  

\section{The anisotropic Ishii-Lion Lemma} \label{sec5}
In this section, we prove the anisotropic Ishii-Lion Lemma (Lemma \ref{IL}).
In fact, Lemma \ref{IL} can be proved by a simple scaling of the original Ishii-Lion Lemma (Lemma \ref{ILo}), which is written below.
The proof of the Lemma \ref{ILo} can be found in \cite{Ishii92,Katzourakis15}.
\begin{lem} \label{ILo} (The original Ishii-Lion Lemma)
    Let $u,v \in C(\Omega)$ and $\phi(x,y) \in C^2(\Omega\times\Omega)$.
    Assume that $(\overline{x},\overline{y}) \in \Omega\times\Omega$ is a local maximum point of $u(x)+v(y) -\phi(x,y)$.
    Then for any $\epsilon>0$, there exist $X,Y \in S(n)$ such that
    \begin{align*}
        (D_x\phi(\overline{x},\overline{y}),X) \in \overline{\mathcal{J}^{2,+}_\Omega} u(\overline{x}), \quad (D_y\phi(\overline{x},\overline{y}),Y) \in \overline{\mathcal{J}^{2,-}_\Omega} v(\overline{y})
    \end{align*}
    and
    \begin{align*}
        -\left(\frac{1}{\epsilon}+\norm{Z} \right) I
         \leq 
        \begin{pmatrix}
             X & 0 \\
             0 &Y
         \end{pmatrix}
         \leq Z +\epsilon Z^2,
    \end{align*}
    where $Z = D^2\phi(\overline{x},\overline{y})$.
\end{lem}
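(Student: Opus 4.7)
My plan is to follow the classical proof of the matrix maximum principle of Crandall, Ishii and Lions, which proceeds by regularizing $u$ and $v$ through sup- and inf-convolution, locating a nearby point of twice differentiability via Alexandrov's theorem and Jensen's lemma, and extracting the matrix inequality from the classical second-order necessary condition at that point.

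First, by multiplying $u+v-\phi$ by a smooth cut-off supported near $(\overline{x},\overline{y})$ and subtracting a quadratic, I may assume that $(\overline{x},\overline{y})$ is a strict global maximum of $u(x)+v(y)-\phi(x,y)$ on $\mathbb{R}^{2n}$, with $u,v$ bounded above. Replacing $\phi$ by its second-order Taylor polynomial at $(\overline{x},\overline{y})$ and absorbing the $o(|\cdot|^2)$ remainder into $u$ and $v$ (this affects neither $Z=D^2\phi(\overline{x},\overline{y})$ nor the closed sub/superjets), I may further assume $\phi(x,y)=\tfrac12\langle Z(x-\overline{x},y-\overline{y}),(x-\overline{x},y-\overline{y})\rangle$ is exactly quadratic. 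For small $\sigma>0$, I then introduce the sup- and inf-convolutions
\begin{equation*}
u^\sigma(x)=\sup_{\xi}\!\bigl\{u(\xi)-\tfrac{1}{2\sigma}|x-\xi|^2\bigr\}, \qquad v_\sigma(y)=\inf_{\eta}\!\bigl\{v(\eta)+\tfrac{1}{2\sigma}|y-\eta|^2\bigr\},
\end{equation*}
so that $u^\sigma$ is semiconvex with distributional Hessian $\geq -(1/\sigma)I$, $v_\sigma$ is semiconcave with Hessian $\leq (1/\sigma)I$, and $u^\sigma\downarrow u$, $v_\sigma\uparrow v$ pointwise as $\sigma\downarrow 0$. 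The regularized expression $u^\sigma(x)+v_\sigma(y)-\phi(x,y)$ has a local maximum at some $(x_\sigma,y_\sigma)\to(\overline{x},\overline{y})$.

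By Alexandrov's theorem, $u^\sigma$ and $v_\sigma$ are twice differentiable almost everywhere, and Jensen's maximum-principle lemma for semiconvex functions ensures that, for a.e.\ small linear perturbation $(p,q)\in\mathbb{R}^{2n}$, the shifted function $u^\sigma(x)+v_\sigma(y)-\phi(x,y)+\langle p,x\rangle+\langle q,y\rangle$ attains a local maximum at a point $(\tilde x_\sigma,\tilde y_\sigma)$ at which both $u^\sigma$ and $v_\sigma$ are classically twice differentiable. The second-order necessary condition at this point reads
\begin{equation*}
\mathrm{diag}\bigl(D^2u^\sigma(\tilde x_\sigma),\,D^2v_\sigma(\tilde y_\sigma)\bigr)\leq Z,
\end{equation*}
while semiconvexity/semiconcavity contribute the matching lower bound $\geq -(1/\sigma)I$. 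Sending the perturbation $(p,q)\to 0$ and using the basic compatibility of sup-convolution with semijets transfers these bounds to elements $X_\sigma\in\overline{\mathcal{J}^{2,+}}u^\sigma(x_\sigma)$, $Y_\sigma\in\overline{\mathcal{J}^{2,-}}v_\sigma(y_\sigma)$, and ultimately to elements of $\overline{\mathcal{J}^{2,+}}u(\overline{x})$ and $\overline{\mathcal{J}^{2,-}}v(\overline{y})$ after $\sigma\to 0$.

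The main obstacle is sharpening the naive upper bound $Z$ to the stated $Z+\epsilon Z^2$. This is handled by performing the regularization in $2n$-dimensional form, replacing the Euclidean kernel $\tfrac{1}{2\sigma}|(x,y)-(\xi,\eta)|^2$ with one adapted to the quadratic $\phi$ itself, so that the sup-convolution Hessian identity $D^2u^\sigma(x)=D^2u(\xi^*)(I-\sigma D^2u(\xi^*))^{-1}$ at the maximizer $\xi^*$ is compatible with the $Z$-structure. Applying the matrix expansion
\begin{equation*}
A(I-\epsilon A)^{-1} = A + \epsilon A^2 + \epsilon^2 A^3(I-\epsilon A)^{-1}, \qquad \text{valid when } \norm{\epsilon A}<1,
\end{equation*}
with $\epsilon$ identified with the regularization parameter $\sigma$, converts the naive bound into the precise $Z+\epsilon Z^2$. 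The lower bound $-(1/\epsilon+\norm{Z})I$ then arises from combining the semi-convexity contribution $-(1/\sigma)I$ with the spectral contribution of $Z$. Compactness of bounded symmetric matrices and the definition of the closed sub-/superjets produce the required $X,Y\in S(n)$. The most delicate piece is the quantitative bookkeeping of this matrix identity under only the semicontinuity hypothesis on $u,v$, which is exactly the content of the Crandall--Ishii--Lions argument.
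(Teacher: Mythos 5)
The paper gives no proof of this lemma at all---it states the result and cites \cite{Ishii92,Katzourakis15}---so your proposal has to be measured against the standard Crandall--Ishii--Lions argument. Your architecture (reduction to a strict maximum with quadratic $\phi$, convolution regularization, Alexandrov plus Jensen, the second-order necessary condition, passage to the limit) is the right one, but there is a genuine directional error and one essential step that is asserted rather than proved. For a \emph{maximum} of $u(x)+v(y)-\phi(x,y)$ both summands must be regularized by \emph{sup}-convolution, so that $u^\sigma(x)+v^\sigma(y)-\phi(x,y)$ is semiconvex in the joint variable: Jensen's lemma applies only to semiconvex functions, and the two-sided Hessian bound needs the semiconvexity estimate $D^2v^\sigma\geq -(1/\sigma)I$ on the $v$-block. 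Your inf-convolution $v_\sigma$ is semiconcave, so the joint function is not semiconvex, Jensen's lemma is unavailable, and semiconcavity supplies the bound $D^2v_\sigma\leq(1/\sigma)I$ in the wrong direction; moreover inf-convolution transfers \emph{subjets}, not the superjets that the maximum structure produces. (The confusion is partly traceable to the paper's own statement, which pairs ``maximum of $u(x)+v(y)-\phi$'' with $\overline{\mathcal{J}^{2,-}_\Omega}v(\overline{y})$ and $\operatorname{diag}(X,Y)$; the standard theorem yields superjets of both summands, and the paper actually invokes the lemma in Section 3 in the $\operatorname{diag}(X,-Y)$ form after replacing $v$ by $-u(y)-\tfrac{K}{2}|y-x_0|^2$.)

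The second issue is that the quantitative heart of the theorem---upgrading the naive bound $\operatorname{diag}(\cdot,\cdot)\leq Z$ to $\leq Z+\epsilon Z^2$ together with the matching lower bound $-(1/\epsilon+\norm{Z})I$---is exactly what your last paragraph waves at. In \cite{Ishii92} this rests on the algebraic lemma comparing $Z$ with $Z_\lambda=(I-\lambda Z)^{-1}Z$ and on the associated ``magic'' inequality coupling the quadratic $\phi$ to the convolution kernel; it requires $\lambda\norm{Z}<1$ and a specific choice of the convolution parameter $\lambda$ in terms of $\epsilon$ and $\norm{Z}$, after which $Z_\lambda\leq Z+\epsilon Z^2$ and $Z_\lambda\geq-(1/\epsilon+\norm{Z})I$ follow from the Neumann-series expansion you quote. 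Saying that $\epsilon$ is ``identified with $\sigma$'' and that the kernel is ``adapted to $\phi$'' does not establish this, and since this is the only part of the lemma that is not routine, the proposal as written does not yet constitute a proof. A smaller point: the $o(|\cdot|^2)$ Taylor remainder of $\phi$ is a function of $(x,y)$ jointly and cannot be split between $u$ and $v$; the standard remedy is to dominate it by $\delta\left(|x-\overline{x}|^2+|y-\overline{y}|^2\right)$, prove the statement with $Z$ replaced by $Z+\delta I$, and let $\delta\to0$ at the end.
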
 
\begin{proof}[Proof of Lemma \ref{IL}] 
For any $\epsilon = (\epsilon_i)$ with $\epsilon_i>0$, we define $\tilde{u}, \tilde{v} \in C$ and $\tilde{\phi} \in C^2$ as
\begin{align*}
    \tilde{u}(x) = u(\diag{\sqrt{\epsilon_i}}x), \quad \tilde{v}(x) = v(\diag{\sqrt{\epsilon_i}}x), \quad \tilde{\phi}(x,y) = \phi(\diag{\sqrt{\epsilon_i}}x,\diag{\sqrt{\epsilon_i}}y).
\end{align*}
If $(\overline{x},\overline{y}) \in \Omega\times\Omega$ is a local maximum point of $u(x)+v(y) -\phi(x,y)$, then $(\tilde{x},\tilde{y}) = (\diag{1/\sqrt{\epsilon_i}}\overline{x}, \diag{1/\sqrt{\epsilon_i}}\overline{y})$ is a local maximum point of $\tilde{u}(x)+\tilde{v}(y) -\tilde{\phi}(x,y)$.
Therefore, we can apply the original Ishii-Lions (Lemma \ref{ILo} with $\epsilon=1$) to $\tilde{u}, \tilde{v}$ and $\tilde{\phi}$.
We know that there exist $\tilde{X},\tilde{Y} \in S(n)$ such that
\begin{align*}
    (D_x\tilde{\phi}(\tilde{x},\tilde{y}),\tilde{X}) \in \overline{\mathcal{J}^{2,+}} \tilde{u}(\tilde{x}), \quad (D_y\tilde{\phi}(\tilde{x},\tilde{y}),\tilde{Y}) \in \overline{\mathcal{J}^{2,-}} \tilde{v}(\tilde{y})
\end{align*}
and
\begin{align*}
    -\left(1+\norm{\tilde{Z}} \right) I
    \leq 
    \begin{pmatrix}
        \tilde{X} & 0 \\
        0 &\tilde{Y}
    \end{pmatrix}
    \leq \tilde{Z} +\tilde{Z}^2,
\end{align*}
where $\tilde{Z} = D^2\tilde{\phi}(\tilde{x},\tilde{y})$.
Note that
\begin{align*}
    D_x\tilde{\phi}(\tilde{x},\tilde{y}) = \diag{\sqrt{\epsilon_i}}D_x\phi(\overline{x},\overline{y}), \quad D_y\tilde{\phi}(\tilde{x},\tilde{y}) = \diag{\sqrt{\epsilon_i}}D_y\phi(\overline{x},\overline{y}) 
\end{align*}
and
\begin{align*}
    \tilde{Z} = \diag{\sqrt{\epsilon_i},\sqrt{\epsilon_i}}Z\diag{\sqrt{\epsilon_i},\sqrt{\epsilon_i}} \quad \text{ with } \quad Z= D^2\phi(\overline{x},\overline{y}).
\end{align*}
Therefore, by scaling back, we have
\begin{align*}
    (D_x\phi(\overline{x},\overline{y}),X) \in \overline{\mathcal{J}^{2,+}_\Omega} u(\overline{x}), \quad (D_y\phi(\overline{x},\overline{y}),Y) \in \overline{\mathcal{J}^{2,-}_\Omega} v(\overline{y})
\end{align*}
where $X = \diag{1/\sqrt{\epsilon_i}}\tilde{X}\diag{1/\sqrt{\epsilon_i}}$ and $Y = \diag{1/\sqrt{\epsilon_i}}\tilde{Y}\diag{1/\sqrt{\epsilon_i}}$.
Note that $$\tilde{Z} +\tilde{Z}^2 = \diag{\sqrt{\epsilon_i},\sqrt{\epsilon_i}}(Z+Z\diag{\epsilon_i,\epsilon_i}Z)\diag{\sqrt{\epsilon_i},\sqrt{\epsilon_i}},$$
to discover that
\begin{align*}
    -\left(1+\norm{\diag{\sqrt{\epsilon_i},\sqrt{\epsilon_i}}Z\diag{\sqrt{\epsilon_i},\sqrt{\epsilon_i}}} \right) I
    \leq 
    \begin{pmatrix}
        \diag{\sqrt{\epsilon_i}}X\diag{\sqrt{\epsilon_i}} & 0 \\
        0 &\diag{\sqrt{\epsilon_i}}Y\diag{\sqrt{\epsilon_i}}
    \end{pmatrix}
    \leq \diag{\sqrt{\epsilon_i},\sqrt{\epsilon_i}}(Z+Z\diag{\epsilon_i,\epsilon_i}Z)\diag{\sqrt{\epsilon_i},\sqrt{\epsilon_i}}.
\end{align*}
Finally, applying $\diag{1/\sqrt{\epsilon_i},1/\sqrt{\epsilon_i}}$ to the above matrix inequality, we get
\begin{align*}
    -\left(1+\norm{\diag{\sqrt{\epsilon_i},\sqrt{\epsilon_i}}Z\diag{\sqrt{\epsilon_i},\sqrt{\epsilon_i}}} \right) \diag{1/\epsilon_i,1/\epsilon_i}
    \leq 
    \begin{pmatrix}
        X & 0 \\
        0 & Y
    \end{pmatrix}
    \leq Z+Z\diag{\epsilon_i,\epsilon_i}Z,
\end{align*}
which completes the proof.
\end{proof}

\bibliographystyle{amsplain}

\end{document}